\long\def\PATRICK#1{{\color{blue}#1}}
\long\def\pvm#1{{\color{blue}#1}}
\long\def\plc#1{{\color{red}#1}}
\newtheorem{thm}{Theorem}[section]
\newtheorem{prop}[thm]{Proposition}
\newtheorem{lem}[thm]{Lemma}
\newtheorem{rem}[thm]{Remark}
\newcommand{\argmin}{\operatorname*{arg\,min}}
\newcommand{\Dd}{\mathbb D}
\newcommand{\dist}{\operatorname{dist}}
\newcommand{\e}{\varepsilon}
\newcommand{\Lip}{\operatorname{Lip}}
\newcommand{\N}{\mathbb N}
\newcommand{\ov}[1]{ \overline{#1} }
\newcommand{\R}{\mathbb R}
\newcommand{\Ss}{\mathbb S}
\newcommand{\sign}{\operatorname{sgn}}
\newcommand{\supp}{\operatorname{supp}}
\newcommand{\weakto}{\rightharpoonup}
\newcommand{\xto}[1]{\xrightarrow{ #1 }}
\newcommand{\Z}{\mathbb Z}
\newcommand{\bB}{\mathbf B}
\newcommand{\bH}{\mathbf H}
\newcommand{\bU}{\mathbf U}
\newcommand{\cB}{\mathcal B}
\newcommand{\cE}{\mathcal E}
\newcommand{\cW}{\mathcal W}
\begin{document}
 
\title{Discrete-to-continuum limits of planar disclinations}

\author{Pierluigi Cesana}

\affil[1]{Institute of Mathematics for Industry, Kyushu University, 
Japan   and Department of Mathematics and Statistics, La Trobe University, 
Australia
}

\author{Patrick van Meurs}

\affil[2]{Faculty of Mathematics and Physics, Kanazawa University, 
Japan 
}


 \maketitle

\begin{abstract}
In materials science, wedge disclinations are defects caused by angular mismatches in the crystallographic lattice. To describe such disclinations, we introduce an atomistic model in planar domains. This model is given by a nearest-neighbor-type energy for the atomic bonds with an additional term to penalize change in volume. We enforce the appearance of disclinations by means of a special boundary condition.

Our main result is the discrete-to-continuum limit 
 of this energy as the lattice size tends to zero. Our proof method is relaxation of the energy. The main mathematical novelty of our proof is a density theorem for the special boundary condition. In addition to our limit theorem, we 
  construct examples of   planar disclinations as  solutions to numerical
 minimization of the model  and show that
 classical results for   wedge disclinations
 are recovered by our analysis. 
\end{abstract}

\section{Introduction}

\textcolor{black}{This paper 
is devoted to the mathematical analysis of a discrete model that describes 
frustrations in atomistic lattices induced by
rotational mismatches. 
Such configurations are called 
wedge disclinations, which are angular defects}. Disclinations are observed in solids in situations where the rotational symmetry is violated at the level of the crystal lattice. Figure \ref{fig:discl:num} illustrates classical, simple examples of disclinations.

\begin{figure}[h]
\centering
\begin{tikzpicture}[scale=1]
\def \x {2.3}
\def \y {-2.8}
\def \xi {0}
\def \yi {0}
\def \a {7}
\begin{scope}[shift={(0,0)},scale=1]
  \node (label) at (\xi, \yi) {\includegraphics[height=6cm]{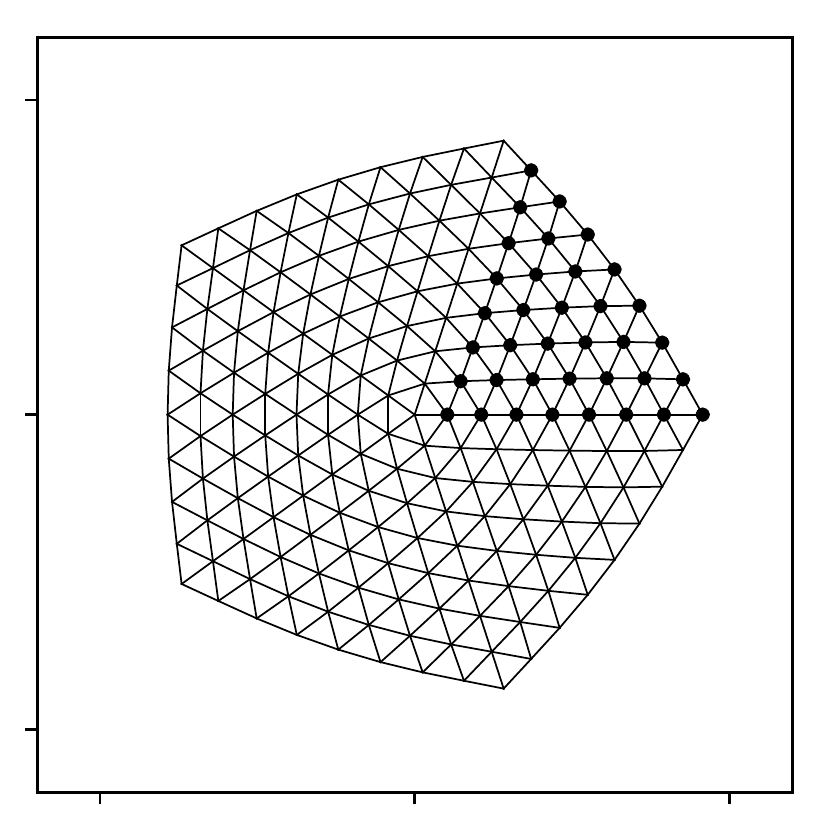}};
  \draw (-\x, \y) node[below] {$-1$};  
  \draw (0, \y) node[below] {$0$};
  \draw (\x, \y) node[below] {$1$};
  \draw (\y, -\x) node[left] {$-1$};  
  \draw (\y, 0) node[left] {$0$};
  \draw (\y, \x) node[left] {$1$};
  \draw (-2.6, \x) node[right] {$\phi = \tfrac{2\pi}5$};
\end{scope}
\begin{scope}[shift={(\a,0)},scale=1]
  \node (label) at (\xi, \yi){\includegraphics[height=6cm]{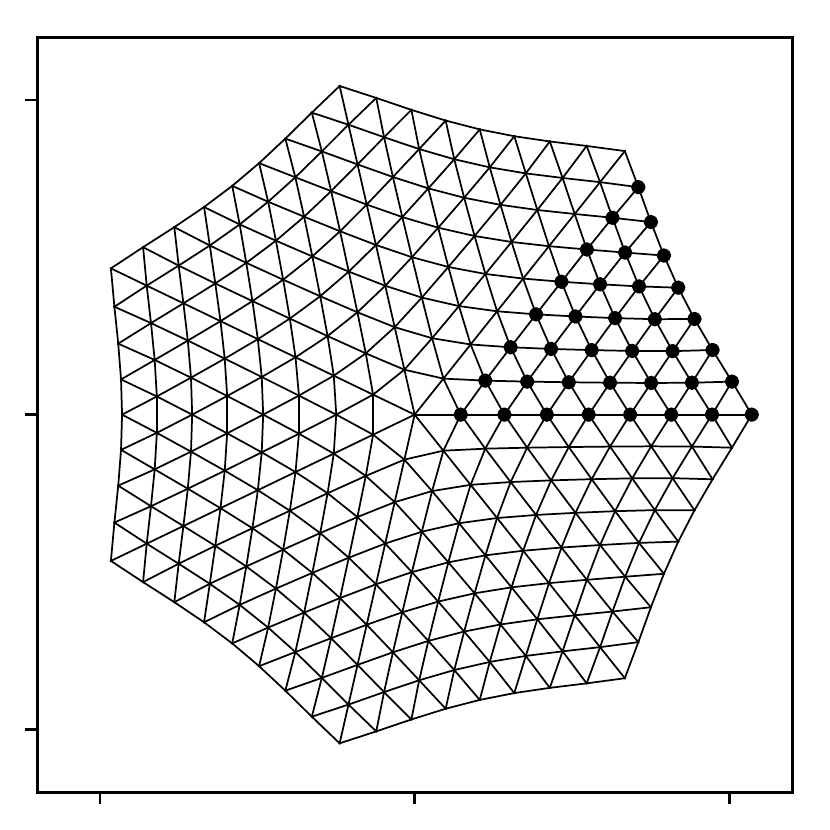}};
  \draw (-\x, \y) node[below] {$-1$};  
  \draw (0, \y) node[below] {$0$};
  \draw (\x, \y) node[below] {$1$};
  \draw (\y, -\x) node[left] {$-1$};  
  \draw (\y, 0) node[left] {$0$};
  \draw (\y, \x) node[left] {$1$};
  \draw (-2.6, \x) node[right] {$\phi = \tfrac{2\pi}7$};
\end{scope}
\end{tikzpicture}
\caption{Simple examples of two wedge disclinations in planar triangular lattices. 
The left figure shows a 5-type or positive disclination and the right figure a 7-type or negative disclination.
The atom in the center has a different number of bonds than the other atoms. $\phi$ is the angle between two such consecutive bonds. The lattice consists of rotated copies of the single wedge indicated by the thick dots. }
\label{fig:discl:num}
\end{figure}

Historically, the existence of disclinations was predicted by Volterra alongside dislocations (translational defects) in a celebrated paper \cite{V07}. However, it was not until the late 1960s that disclinations saw a systematical investigation both from an experimental and theoretical perspective. 
First 
examples of disclinations over planar lattices have been discovered in superconductors and reported in 
\cite{TE68} and \cite{ET67}.
While a dislocation is
a singularity of the deformation field which may be described by a lattice-valued vector, called Burgers vector,
a disclination, as stated in
\cite{AEST68} (see also \cite{N67}),
  \textit{is characterized by a closure failure of rotation ... for a closed circuit round the disclination centre.} 
A continuum theory for disclinations in linearized elasticity has been first derived by de Wit in \cite{W68}
based on the idea of compatible elasticity and later elaborated by the same author in a series of   articles \cite{dW1}, \cite{dW2}, \cite{dW3}.
A comprehensive theory for disclinations (alongside dislocations) in non-linear elasticity has   been developed by Zubov \cite{Z97}.
%
%
We refer to \cite{ZA18} for 
more recent developments on unified approaches to treat dislocations and disclinations as well as for
a review of  mechanical models of disclinations.

%
%

Our work is inspired by a number of experimental observations
which
have revealed formation of disclinations in metallic structures 
under a variety of mechanical loading and forces, geometrical regimes and kinematical constraints.
%
Here, we elaborate on two such observations.

In austenite-to-martensite transformations, disclinations may emerge during the formation of rotated
(and constant-strain) shear-bands \cite{B}.
Such transformations are purely elastic and of the type solid-to-solid. They appear in a class of metals; in particular, in shape-memory alloys. Upon symmetry-break (typically triggered and driven by a negative temperature gradient), austenite, the high-symmetry and highly homogeneous crystal phase, turns into martensite, an anisotropic crystal phase with lower symmetry. The crystallographic lattice accommodates this phase change by forming a complicated  microstructure composed of a mixture of thin plates and needle-shaped regions exhibiting differently rotated copied of martensitic phases.
In a zero  stress-microstructure, martensite can be described by a
piecewise constant deformation gradient.  Constant-strain regions, that is, regions of constant  crystal orientation, are separated by sharp planar interfaces 
according to kinematical compatibility. This compatibility is called a rank-1 connection of the corresponding deformation tensors.
However, such configurations are ideal and typically
atomistic non-idealities such as dislocations and disclinations appear in large numbers.
%
%
Outstanding examples of disclinations   in martensite are represented by "nested" star-shaped geometries observed in 
$\textrm{Pb}_3(\textrm{VO}_4)_2$.
Experiments are described in
 \cite{KK91,MA80};
modeling work as well as numerical and exact constructions are described  in
\cite{PL13} and  \cite{CPL14} respectively, 
and mathematical theories are developed in
\cite{CDRZZ19}.
Significantly more complicated microstructures rich in defects are
described in \cite{ILTH17} for Ti-Nb-Al-based alloys. Here, martensite nucleates and evolves in the form of thin plates embedded in an austenitic lattice. The evolution of these plates is complex due to
plates colliding against surrounding structures,
undergoing further branching into additional martensitic sub-plates or
being reflected after hitting a grain boundary. Such evolution results in
self-similar patterns
resembling fractal structure
which are rich in 
disclinations and dislocations
\cite{ILTH17}.
The available models of such   microstructures
are essentially based on statistical analysis 
$\cite{IHM13}$
and probability
\cite{BCH15,CH20},
and thus the consistency with atomistic models remains elusive.
This is where we aim to contribute.
  
The second experimental observation is
the discovery of a superior
(and, yet to date, largely unexplained)
structural reinforcement mechanism which
has recently
spurred
scientific interest 
on the experimental
as well as on the theoretical investigation of kink formation
in certain classes of metal alloys.
Formation of kinks
consisting of approximately constant-strain
 bands 
accompanied by high rotational 
stretches of the lattice are observed
in classes of laminate "mille-feuille"
structures
under uniaxial compression 
\cite{HOIYMNK16,HMHYIOONK16}. 
In one of their most typical morphologies,  bands manifest themselves in the form of planar and sharp ridge-shaped regions which appear at various length-scales 
and are accompanied by  localized plastic stresses and formation of disclinations
\cite{LN15}. 
%
%
%
%
%
%
%
%
%
%
%
Kinks of various morphologies have been  
described in
\cite{I19}
where constructions of piecewise affine deformation maps based on the rank-1 connection rule
and incorporating angular mismatches
are presented.
While \cite{I19}'s analysis sheds light on the kinematics of the disclination-kinking mechanism for various morphologies of kinks, 
there is no available model based on atomistic descriptions which describes the energy of such disclination-kinking mechanisms.

A common aspect on both martensitic microstructures and kink formation is that
planar regions of approximately constant strains and constant-orientation lattices need to rotate in order to preserve the continuity of the deformation field
across their common border.
As a result, these materials \textit{necessarily} develop angular lattice misfits which are striking examples of wedge disclinations. This motivates the modeling assumption of this paper that wedge disclinations  are caused by     large (non-linear) rotational stretches in planar-confined geometries.

Our aim is to take the first step in the direction of 
a comprehensive variational theory 
that is suitable to simultaneously treat microscale and localized defects
and to  predict
their
effect on 
large (non-linear) elastic
and plastic deformations
including kinks and 
 shear-bands.
 By designing a simple, nearest-neighbor-type interaction mechanism, we construct a model 
which we apply to describe a single  disclination in a planar lattice and 
 which is at the same time potentially adaptable to describe
multi-disclination systems and to incorporate other lattice defects such as voids, dislocations and grain boundaries.

%
%



%

By pursuing this aim, we also fill a gap in the literature on atomistic modeling of planar lattice defects and the limit thereof as the lattice spacing tends to $0$. To identify this gap, we review related literature. 
First, in the framework of linearized elasticity, the formal asymptotic expansions in \cite{SN88} provide a continuum model for lattice defects. 
Second, in 
\cite{LazzaroniPalombaroSchloemerkemper13} 
two triangular lattices with different lattice spacings are attached together, which forces dislocations to form at the interface. The main result is a continuum limit of this model as the lattice spacing tends to 0.
Third, based on the discrete calculus of lattices constructed in \cite{ArizaOrtiz05}, the stress in a periodic crystal induced by parallel screw dislocations is computed in 
\cite{Ponsiglione07,
HudsonOrtner14,
HudsonOrtner15,
EhrlacherOrtnerShapeev16,
BraunBuzeOrtner19}.
These results provide quantitative estimates between the displacement in the lattice and the displacement computed from linear elasticity in the continuum counter part. In \cite{BuzeHudsonOrtner19} these techniques are extended to capture cracks. All these works deal with either localized or small deformation to the underlying two dimensional lattice, which is unfit for disclinations (see, e.g., the relatively large deformations in Figure \ref{fig:discl:num}).

For large deformations, the recent work in \cite{KM18} presents and anlyzes an atomistic model of a stretchable hexagonal lattice defined over a smooth manifold, in which the main result is the continuum limit in the form of $\Gamma$-convergence. Since the approach in \cite{KM18} is designed to describe  the energetics of highly distorted membranes, we follow a similar approach. However, the choice in \cite{KM18} that the number of bonds per atom is always $6$ makes the result not applicable to disclinations.

As in \cite{KM18}, we assign an energy $E_\e$ to a deformed lattice, where $\e$ is the lattice spacing. Two examples of such deformed lattices are illustrated by the dotted nodes in Figure \ref{fig:discl:num}. Given a deformed lattice, the energy $E_\e$ penalizes stretching and compression of the atomic bonds and changes in volume of the triangles formed by three neighboring atoms. To enforce the appearance of a $5$- or $7$-type disclination, we require the deformed lattice to satisfy a special boundary condition such that $5$ or $7$ rotated copies of it fit together such as in Figure \ref{fig:discl:num}.

%
%
%

Our main result, Theorem \ref{thm}, is the exact derivation of $E$, the macroscale energy obtained in the limit as $\e \to 0$, that is,
$$
E = \Gamma\hbox{-}\lim_{\varepsilon\to 0} E_{\varepsilon}
$$
in a suitable topology.



From the mathematical perspective, 
the interest of our analysis lies in the treatment of the special boundary condition.
%
%
Conceptually, this boundary conditions requires us to incorporate a pointwise constraint 
in the space of traces on $W^{1,p}$-Sobolev vector maps which
are characterized by means of a nonlocal norm. Our main contribution on this is Proposition \ref{prop:densy}.

 The paper is organized as follows. 
In Section $\ref{2003072140}$ we present the discrete mechanical model and the mathematical setting
required for the related variational analysis.
 In Section $\ref{s:t}$ we state and prove our main result, Theorem \ref{thm}, on the $\Gamma$-convergence of the lattice energy $E_\e$. In this proof, we postpone the proofs of technical lemmas to Section \ref{s:techs}, which includes the proof of Proposition \ref{prop:densy} on density in the space of admissible lattice displacements. 
 In Section $\ref{2003202308}$ we explore the physical implications of our $\Gamma$-convergence analysis in terms of energy and stress states of the minimizers of the continuum model.  Finally in Section $\ref{s:num}$ we present    several numerical realizations of both positive and negative disclinations.

\section{The lattice energy $E_\e$}\label{2003072140}

Here we define the atomic lattice energy $E_\e$ as briefly introduced in the introduction. We start with the kinematics. Inspired by Figure \ref{fig:discl:num}, we consider a two-dimensional model. This corresponds conceptually to the mid-section of a 3-dimensional body. Furthermore, we impose rotational symmetry so that we may confine the domain to a single wedge in Figure \ref{fig:discl:num} indicated by the black dots. The reference domain is given by the equilateral open triangle $\Omega \subset \R^2$ of size 1 with boundary $\Gamma = \cup_{i=1}^3 \Gamma_i$ as depicted in Figure \ref{fig:discl}. We take the reference positions of the atoms as a triangular lattice $\mathcal L_\e$, where $\mathcal L_\e$ is such that it fits on top of $\Omega$ as in Figure \ref{fig:discl}, i.e., the lattice spacing $\e \in (0,1)$ is such that $\frac1\e \in \N$ and $\mathcal L_\e$ is positioned such that each closed line segment $\Gamma_i$ fits on top of the atoms in one of the lattice directions. We denote by
\begin{equation} \label{Be}
  B^\varepsilon := \Big\{ \varepsilon R_\theta e_1 : \theta \in \frac\pi3 \Z \Big\}
\end{equation}
the set of the six outward-pointing bonds in $\mathcal L_\varepsilon$ from any lattice point, where $R_\theta \in SO(2)$ is the counter-clockwise rotation matrix by angle $\theta$. For later use, we sometimes interpret $\mathcal L_\e$ as a planar graph $\mathcal G_\varepsilon = (\mathcal V_\varepsilon, \mathcal E_\varepsilon, \mathcal T_\varepsilon)$, where $\mathcal V_\varepsilon$ is the set of all vertices $x_i$, $\mathcal E_\varepsilon$ is the set of all edges $e_{ij}$ between neighboring vertices $x_i, x_j \in \mathcal V_\varepsilon$, and $\mathcal T_\varepsilon$ is the set of all open triangles $T_{ijk} \in \Omega$ with sides given by the three edges $e_{ij}, e_{ik}, e_{jk} \in \mathcal E_\varepsilon$.

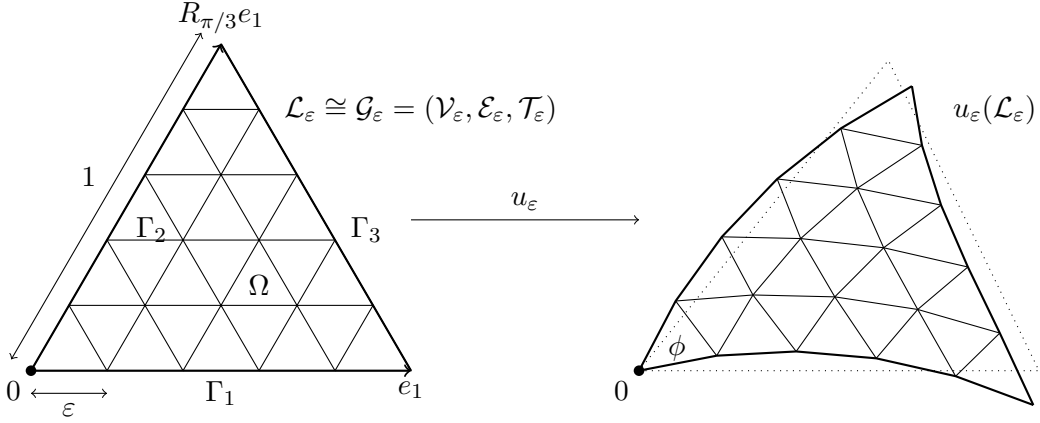
\begin{figure}[h!]
\centering
\begin{tikzpicture}[scale=1]
    \def \n {4} 
    \def \qthree {1.7321}
    \def \a {1.0525}
    \def \b {0.6562}
    \def \c {0.8229}

    \fill (0,0) circle (.07) node[anchor = north east]{$0$};
    \foreach \i in {0,...,\n} {
      \foreach \j in {\i,...,\n} {
        \def \x {\i/2 + \j/2}
        \def \y {\j*\qthree/2 - \i*\qthree/2}
        \begin{scope}[shift={(\x, \y)}] 
          \draw[thin] (0, 0) -- (1, 0) -- (0.5, \qthree/2) -- (0,0);
        \end{scope}
	  }
    }
    
    \begin{scope}[shift={(2*\n, 0)}]
      \fill (0,0) circle (.07) node[anchor = north east]{$0$};
      \draw[thin] (4.747, 0.499) -- (4.166, -0.073);
      \draw[thin] (4.327, 1.375) -- (3.667, 0.812) -- (3.121, 0.164);
      \draw[thin] (3.974, 2.196) -- (3.219, 1.630) -- (2.576, 0.979) -- (2.068, 0.255);
      \draw[thin] (3.723, 2.988) -- (2.873, 2.419) -- (2.127, 1.752) -- (1.499, 1.004) -- (1.022, 0.199);
      \draw[thick] (3.591, 3.773) -- (2.655, 3.212) -- (1.818, 2.542) -- (1.090, 1.775) -- (0.482, 0.924) -- (0.000, 0.000);
      \draw[thick] (5.188, -0.455) -- (4.166, -0.073) -- (3.121, 0.164) -- (2.068, 0.255) -- (1.022, 0.199) -- (0.000, 0.000);
      \draw[thin] (4.747, 0.499) -- (3.667, 0.812) -- (2.576, 0.979) -- (1.499, 1.004) -- (0.482, 0.924);
      \draw[thin] (4.327, 1.375) -- (3.219, 1.630) -- (2.127, 1.752) -- (1.090, 1.775);
      \draw[thin] (3.974, 2.196) -- (2.873, 2.419) -- (1.818, 2.542);
      \draw[thin] (3.723, 2.988) -- (2.655, 3.212);
      \draw[thick] (5.188, -0.455) -- (4.747, 0.499) -- (4.327, 1.375) -- (3.974, 2.196) -- (3.723, 2.988) -- (3.591, 3.773);
      \draw[thin] (4.166, -0.073) -- (3.667, 0.812) -- (3.219, 1.630) -- (2.873, 2.419) -- (2.655, 3.212);
      \draw[thin] (3.121, 0.164) -- (2.576, 0.979) -- (2.127, 1.752) -- (1.818, 2.542);
      \draw[thin] (2.068, 0.255) -- (1.499, 1.004) -- (1.090, 1.775);
      \draw[thin] (1.022, 0.199) -- (0.482, 0.924);
    \end{scope}
 
    \draw (\n + 1,0) -- (.5*\n + .5, \n*\qthree/2 + \qthree/2);
    \begin{scope}[shift={(2*\n, 0)}]
      \draw[dotted] (0,0) -- (\a*\n + \a,0) -- (\n*\b + \b, \c*\n + \c) -- cycle;
    \end{scope}
    \draw (\n - 1, \qthree/2) node[above]{$\Omega$};
    \draw (\n/2 + .5, 0) node[below]{$\Gamma_1$};
    \draw (\n/4 + .25, \qthree*\n/4 + \qthree/4) node[anchor = north west]{$\Gamma_2$};
    \draw (\n - \n/4 + 1.75, \qthree*\n/4 + \qthree/4) node[anchor = north east]{$\Gamma_3$};
    \draw (2*\n + .5, 0) node[above] {$\phi$};
    
    \draw[thick,->] (0, 0) -- (\n + 1, 0) node[below] {$e_1$};
    \draw[thick,->] (0, 0) -- (0.5*\n + 0.5, 0.5*\qthree*\n + 0.5*\qthree) node[above] {$R_{\pi/3} e_1$};
    \draw[thick] (\n + 1, 0) -- (0.5*\n + 0.5, 0.5*\qthree*\n + 0.5*\qthree);
    
    \draw[<->] (0, -0.3) -- (1, -0.3) node[midway, below]{$\varepsilon$};
    \begin{scope}[rotate = 60] 
      \draw[<->] (0, 0.3) -- (\n + 1, 0.3) node[midway, anchor = south east]{$1$};
    \end{scope}
    \draw (\n*.8, \n*\qthree/2) node[right]{$\mathcal L_\varepsilon \cong \mathcal G_\varepsilon = (\mathcal V_\varepsilon, \mathcal E_\varepsilon, \mathcal T_\varepsilon)$};
    \draw[->] (\n + 1, \n/2) -- (2*\n, \n/2) node[midway, above]{$u_\varepsilon$};
    \draw (3*\n, \n*\qthree/2) node[right]{$u_\varepsilon (\mathcal L_\varepsilon )$};
\end{tikzpicture} 
\caption{The reference lattice $\mathcal L_\varepsilon$ and an admissible deformation $u_\varepsilon$ (see \eqref{Ae}). The dotted triangle on the right is a visual aid to see that $u_\e$ satisfies the boundary condition.
} 
\label{fig:discl}
\end{figure}

The set $\mathcal A_{\varepsilon}$ of admissible displacements is given by
\begin{gather} \label{Ae}
  \mathcal A_\varepsilon = \{ u_\varepsilon : \mathcal L_\varepsilon \to \R^2 : u_\varepsilon ( R_{\pi/3} x ) = R_\phi u_\varepsilon (x) \ \text{ for all } x \in \Gamma_1 \cap \mathcal V_\varepsilon \},
\end{gather}
where $\phi \in \{ \frac{2\pi}5, \frac{2\pi}7 \}$ is the angle associated with a $5$- or $7$-type disclination (see Figure \ref{fig:discl:num}). Since $\mathcal V_\varepsilon$ is a finite set, the map $u_\e$ can be identified with a vector in $\R^{2 |\mathcal V_\varepsilon|}$. The boundary condition in \eqref{Ae} is such that the rotated copy $R_\phi u_\varepsilon (\mathcal L_\varepsilon )$ of $u_\varepsilon (\mathcal L_\varepsilon )$ fits seamlessly to $u_\varepsilon (\mathcal L_\varepsilon )$. Adding more rotated copies, we obtain a deformed lattice with rotational symmetry such as that in Figure \ref{fig:discl:num}, which has a $5$- or $7$-type disclination at the origin.  
\smallskip

Next we define the lattice energy on $\mathcal A_\varepsilon$ as inspired by \cite{KM18}. We start with a formal description. Given $u_\varepsilon \in \mathcal A_\varepsilon$, we set the lattice energy formally as
\begin{gather} \label{Ee:intro}
 \tilde E_\varepsilon (u_\varepsilon)
  = \varepsilon^2 \sum_{e_{ij} \in \mathcal E_\varepsilon} w(e_{ij}) \Phi \left( \frac{| u_\varepsilon(x_i) - u_\varepsilon(x_j) |}\varepsilon - 1 \right) 
    + \frac{ \varepsilon^2 }2 \sum_{T \in \mathcal T_\varepsilon} \Psi \Big( \sign(u_\e (T)) \frac{|u_\e (T)|}{|T|} \Big).
\end{gather}
where the potential
\begin{gather*}
  \Phi (r) := \frac1p | r|^p, \quad p \geq 2
\end{gather*}
penalizes atomic bonds which are not of length $\e$ (the parameter value $p=2$ corresponds to linear elasticity), 
the weight function
\begin{equation} \label{w}
  w : \mathcal E_\varepsilon \to \{\tfrac12, 1\},
  \qquad w(e_{ij}) = \left\{ \begin{aligned}
    & \tfrac12
    && \text{if } e_{ij} \subset \Gamma \\
    & 1
    && \text{otherwise}
  \end{aligned} \right.
\end{equation} 
counts the outer edges as half\footnote{we model edges as part of the volume of the medium around them.},
and the potential $\Psi$ penalizes change in volume of the triangles $T \in \mathcal T_\e$ (here, $|T|$ is the volume of $T$, and $|u_\e(T)|$ is the volume of the triangle $T$ after applying the displacement $u_\e$), especially if the volume of $T$ gets inverted under $u_\e$.
 Note that while the identity map minimizes $\tilde E_\e$, it is not in $\mathcal A_\e$ because of the boundary condition. Hence, the boundary condition enforces mechanical frustration. For $p=2$ and $\Psi \equiv 0$, Figure \ref{fig:discl:num} illustrates the deformed lattice of a local minimizer in $\mathcal A_\e$ of $\tilde E_\e$ which does not contain negative change in volume. In this paper we always assume $p\ge 2$ unless specified otherwise.

Since the term involving $\Psi$ is not derived from physical principles, we elaborate on this modelling choice. It is well-known that when $\Psi \equiv 0$, lattice energies with only nearest-neighbor interactions such as $\tilde E_\varepsilon$ do not penalize folding or any other negative change in volume, and as a result, the related continuum energy may not penalize compression. While our boundary condition in \eqref{Ae} is not standard, we show in Section \ref{s:num} by means of numerical simulations that folded patterns appear as local minimizers of $\tilde E_\e$ when $\Psi \equiv 0$. 

To penalize folding, we pose the following minimal requirements on $\Psi$:
\begin{itemize}
  \item[($\Psi 1$)] $\exists \, C > 0 \ \forall \, a, b \in \R : |\Psi(a) - \Psi(b)| \leq C \Big( |a|^{\tfrac p2 - 1} + |b|^{\tfrac p2 - 1} + 1 \Big) |a - b|$;
  \item[($\Psi 2$)]  $\Psi\geq 0$, and 
   $\Psi (a) = 0 \Longleftrightarrow a = 1$.
\end{itemize}
%
%
Condition ($\Psi 1$) is a continuity estimate, which implies both local Lipschitz continuity and $\frac p2$-growth.
Condition ($\Psi 2$) ensures that any change in volume is penalized.  A simple example of $\Psi$ is $\Psi(a) = |a-1|$. 
These conditions are more general than
those in \cite[Section 3.3]{KM18}.
%

\smallskip

The term in $\tilde E_\e$ related to $\Psi$ has an inconvenient form. We fix this by changing variables. The result is a rigorously defined energy functional $E_\e$, which we will use in the remainder of the paper. 

We change variables by describing the state $u_\e$ as the deformation of a displacement $U_\e$ defined on $\Omega$. Given $u_\varepsilon \in \mathcal A_\varepsilon$, we set $U_\varepsilon : \Omega \to \R^2$ as the piecewise linear extension of $u_\varepsilon$ to $\Omega$, i.e.,
\begin{multline*}
   U_\varepsilon ( \alpha x_i + \beta x_j + (1 - \alpha - \beta) x_k )
   := \alpha u_\varepsilon(x_i) + \beta u_\varepsilon(x_j) + (1 - \alpha - \beta) u_\varepsilon(x_k) \\
    \forall \, T_{ijk} \in \mathcal T_\varepsilon, 
            \ 0 \leq \alpha \leq 1, 
            \ 0 \leq \beta \leq 1 - \alpha.
\end{multline*} 
We note that
\begin{equation} \label{nUe:expl}
  \big[ \nabla U_\varepsilon (x) \big]^T e
  = \frac{ u_\varepsilon(x_\ell) - u_\varepsilon(x_m) }\varepsilon
    \quad \forall \, T_{ijk} \in \mathcal T_\varepsilon, 
            \ x \in T_{ijk},
\end{equation}
where $e \in B^1$ is any lattice bond of unit direction (see \eqref{Be}), and the indices $\ell, m \in \{i,j,k\}$ depend on $e$. In particular, $\nabla U_\e$ is constant on each $T \in \mathcal T_\varepsilon$,   
\begin{equation*} 
  | \det \nabla U_\e | = \frac{|U_\e (T)|}{|T|}
  \quad \text{on } T
\end{equation*}
is the relative change in volume of $T$ under $U_\e$, and the sign of the determinant determines whether the volume of $T$ is inverted under $U_\e$. From these observations, we define the second term in \eqref{Ee:intro} rigorously by
\begin{equation} \label{det:nUe}
  \Psi \Big( \sign(u_\e (T)) \frac{|u_\e (T)|}{|T|} \Big)
  := \Psi( \det \nabla U_\e )
  \quad \text{on } T.
\end{equation}

Next we rewrite $\mathcal A_\e$ and $\tilde E_\e$ in terms of $U_\e$. This yields
\begin{equation*} 
  \mathcal B_\varepsilon^\phi 
  := \big\{ U_\varepsilon : \overline \Omega \to \R^2 \mid U_\varepsilon \text{ is } \mathcal L_\varepsilon \text{-piecewise linear and }  
  \forall \, x \in \Gamma_1 : R_{\phi} U_\varepsilon(x) = U_\varepsilon ( R_{\pi/3} x) \big\}.
\end{equation*} 
Using \eqref{nUe:expl} and \eqref{det:nUe}, we rewrite \eqref{Ee:intro} as
\begin{align} \notag
  \frac{\sqrt 3}2 \tilde E_\e(u_\e)
  &= \frac{\sqrt 3}2 \bigg( \varepsilon^2 \sum_{T \in \mathcal T_\varepsilon} \frac12 \sum_{e \in B^1} \frac1{|T|} \int_T \Phi \left( \big| e \nabla U_\varepsilon (x) \big| - 1 \right) \, dx \\\notag
  &\qquad \qquad + \frac{ \varepsilon^2 }2 \sum_{T \in \mathcal T_\varepsilon} \frac1{|T|} \int_T \Psi ( \det \nabla U_\e (x) ) \, dx \bigg) \\\notag
  &= \int_\Omega \sum_{e \in B^1} \Phi \left( \big| e \nabla U_\varepsilon (x) \big| - 1 \right) + \Psi ( \det \nabla U_\e (x) ) \, dx  \\\label{Ee:Te:form}
  &= \int_\Omega W (\nabla U_\varepsilon (x) ) \, dx,
\end{align} 
where
\begin{equation} \label{W:KM17}
  W(A) := \sum_{e \in B^1} \Phi \left( \big| e A \big| - 1 \right) + \Psi( \det A ).
\end{equation}   
In the computation above, the weight function $w(e_{ij})$ turns into the factor $\frac12$ due to the fact that each edge in the interior of $\Omega$ borders two triangles in $\mathcal T_\varepsilon$. Eq.\ \eqref{Ee:Te:form} motivates us to define
\begin{equation} \label{Ee:rig}
  E_\e : \mathcal B_\varepsilon^\phi \to [0, \infty),
  \qquad E_\varepsilon (U_\varepsilon) 
  := \int_\Omega W (\nabla U_\varepsilon (x) ) \, dx.
\end{equation}

\begin{rem}[Properties of $\mathcal B_\varepsilon^\phi$]\label{2005171600}
We note that $\mathcal B_\varepsilon^\phi \subset \Lip (\overline \Omega; \R^2)$ and that any $U_\varepsilon \in \mathcal B_\varepsilon^\phi$ satisfies $U_\varepsilon(0) = 0$. 
While $\mathcal B_\e^\phi$ does not contain a subspace of constants, it does contain a subspace of linear maps $U_\e(x) = Ax$. A possible choice for $A \in \R^{2 \times 2}$ is the one that satisfies $A e_1 = e_1$ and $A (R_{\pi/3} e_1) = R_\phi e_1$. 
\end{rem}

\section{Continuum limit}
\label{s:t}

Having introduced the lattice energy $E_\e$ on the triangular lattice, we are now in a position to discuss the continuum limit as $\e \to 0$.
%
%
To keep track of the asymptotic behavior of minima and minimizers of $E_\e$
we characterize the continuum model with $\Gamma$-convergence \cite{DalMaso93}.
 
Let $p \geq 2$. The domain of the continuum energy is
\begin{equation*}
  W_\phi^{1,p}(\Omega; \R^2) := \bigg\{ U \in W^{1,p} (\Omega; \R^2) : R_{\phi} U_\varepsilon(x) = U_\varepsilon ( R_{\pi/3} x) \text{ for a.e.\ } x \in \Gamma_1 \bigg\}.
\end{equation*}
Observe that $W_\phi^{1,p}(\Omega; \R^2)$ is linear  (and, in particular, convex),
and non-empty since
$ \mathcal B_\e^\phi \cup W_0^{1,p}(\Omega; \R^2) \subset  W_\phi^{1,p}(\Omega; \R^2)$.
Moreover, if
$U\in W_\phi^{1,p}(\Omega; \R^2)$, there holds $RU\in W_\phi^{1,p}(\Omega; \R^2)$
for any $R\in SO(2)$.
Thanks to the properties of traces (see Lemma \ref{lem:Trace} below), we have that $W_\phi^{1,p}(\Omega; \R^2)$ is strongly closed and, therefore, weakly closed as well thanks to convexity.

The continuum energy functional  is given by
\begin{equation} \label{E:QW}
  E : L^p( \Omega; \R^2 ) \to [0,\infty], \qquad  
  E (U)
  = \left\{ \begin{aligned}
    & \int_\Omega QW(\nabla U)
    && U \in W_\phi^{1,p}(\Omega; \R^2) \\
    & \infty
    && U \notin W_\phi^{1,p}(\Omega; \R^2),
  \end{aligned} \right.
\end{equation} 
where $W$ is as in \eqref{W:KM17} and $QW$ is the quasiconvex  envelop  of $W$ defined by 
\begin{equation}\label{2004262117}
  QW (A) := \sup \{ f(A) : V \leq W, \ f \text{ quasiconvex} \},
\end{equation}
where $f$ being quasiconvex means that 
$f : \R^{2 \times 2} \to \R$ is 
Borel measurable, locally bounded and  satisfies
\begin{equation*}
  f(A) \leq \frac1{|\omega|} \int_{\omega} f(A + \nabla \vartheta(x)) \, dx
\end{equation*}
for any bounded open set $\omega \subset \R^2$, 
any $A \in \R^{2 \times 2}$ and any $\vartheta \in W^{1,\infty}_0 (\omega,\mathbb{R}^2)$.

\begin{thm} \label{thm}
For $\phi \in \{ \frac{2\pi}5, \frac{2\pi}7 \}$ and $2 \leq p < \infty$, $E_\varepsilon$ (see \eqref{Ee:rig}) $\Gamma$-converges as $\varepsilon \to 0$ to $E$ in the strong $L^p(\Omega)$ topology.
\end{thm}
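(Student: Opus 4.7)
The strategy is the standard two-step $\Gamma$-convergence argument---a liminf inequality together with compactness, and the construction of a recovery sequence---with the main obstruction concentrated in how the special boundary condition from $\mathcal A_\e$ survives the passage to the continuum.

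For compactness and the liminf inequality, I would begin with a sequence $(U_\e) \subset \mathcal B_\e^\phi$ such that $U_\e \to U$ in $L^p(\Omega;\R^2)$ and $\sup_\e E_\e(U_\e) < \infty$. The $p$-growth of $\Phi$, summed over the six directions $e \in B^1$ (which span $\R^2$), yields a pointwise bound of the form $|\nabla U_\e|^p \leq C(W(\nabla U_\e)+1)$, so $\|\nabla U_\e\|_{L^p}$ is uniformly bounded; together with the anchor $U_\e(0)=0$ from Remark \ref{2005171600} and a Poincar\'e inequality, this gives $U_\e \weakto U$ in $W^{1,p}(\Omega;\R^2)$ along a subsequence. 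Since $W_\phi^{1,p}$ is weakly closed by the trace argument recorded after its definition, $U \in W_\phi^{1,p}$. Combining $W \geq QW$ with the standard weak lower semicontinuity theorem for quasiconvex integrands of $p$-growth then yields
\begin{equation*}
  \liminf_{\e \to 0} E_\e(U_\e) \;\geq\; \liminf_{\e \to 0} \int_\Omega QW(\nabla U_\e) \;\geq\; \int_\Omega QW(\nabla U) \;=\; E(U).
\end{equation*}

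For the limsup inequality, given $U \in W_\phi^{1,p}(\Omega;\R^2)$ I would proceed in three layers. \emph{Layer 1:} invoke Proposition \ref{prop:densy} to obtain maps $U^{(k)}$ that are regular (Lipschitz, piecewise affine on a fixed triangulation of $\Omega$) and still satisfy the boundary constraint, with $U^{(k)} \to U$ strongly in $W^{1,p}$; the $p$-growth of $QW$ then forces $\int_\Omega QW(\nabla U^{(k)}) \to \int_\Omega QW(\nabla U)$. \emph{Layer 2:} on each triangle $\omega$ of this triangulation where $\nabla U^{(k)} \equiv A$, use the defining formula \eqref{2004262117} for $QW$ to pick $\vartheta_{k,\delta} \in W_0^{1,\infty}(\omega;\R^2)$ with $\frac1{|\omega|}\int_\omega W(A+\nabla \vartheta_{k,\delta}) \leq QW(A)+\delta$; gluing across triangles is legitimate because the perturbations vanish on triangle boundaries and in particular on $\Gamma_1$, so $V^{(k,\delta)} := U^{(k)}+\vartheta_{k,\delta}$ remains in $W_\phi^{1,p}$ and satisfies $\int_\Omega W(\nabla V^{(k,\delta)}) \leq \int_\Omega QW(\nabla U^{(k)}) + \delta|\Omega|$. \emph{Layer 3:} lightly mollify $V^{(k,\delta)}$ and take its piecewise linear interpolant $U_\e^{(k,\delta)}$ on $\mathcal L_\e$; by the continuity estimates ($\Psi 1$) and the continuity of $\Phi$, the discretization error $|E_\e(U_\e^{(k,\delta)}) - \int_\Omega W(\nabla V^{(k,\delta)})|$ vanishes as $\e \to 0$ for each fixed $(k,\delta)$. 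A triple diagonal extraction over $(k,\delta,\e)$ then closes the argument.

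The main obstacle is reconciling Layers 1 and 3 with the \emph{pointwise} constraint defining $\mathcal B_\e^\phi$: the lattice interpolant must satisfy the exact identity $R_\phi U_\e^{(k,\delta)}(x) = U_\e^{(k,\delta)}(R_{\pi/3} x)$ at \emph{every} vertex $x \in \Gamma_1 \cap \mathcal V_\e$, whereas convolution and standard smoothing do not commute with this nonlocal rotational relation. This is precisely the gap that Proposition \ref{prop:densy} is designed to fill by producing approximants whose traces satisfy the constraint exactly rather than up to a small error, plausibly through a combination of mollification with an explicit symmetrization or cut-and-paste near $\Gamma_1$ that leverages the rotational compatibility of $\Omega$ and $\mathcal L_\e$. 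With such a density result in hand, the remainder follows the classical relaxation argument for vector-valued integral functionals of $p$-growth.
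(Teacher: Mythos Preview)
Your compactness and liminf arguments match the paper's essentially verbatim. One small remark: the paper's Poincar\'e inequality (Lemma~\ref{l:Poincare}) rests on the rotational boundary condition rather than on the point anchor $U_\e(0)=0$; the latter would be delicate at $p=2$, where $W^{1,2}(\Omega)$ does not embed into $C^0$. Since $\mathcal B_\e^\phi \subset W^{1,p}_\phi$, you may simply invoke that lemma.

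For the limsup the paper takes a shorter route. The point you do not exploit is that the classical relaxation theorem quoted in \eqref{1803051356} already produces approximants $U_k \in W_0^{1,p}(\Omega)+\{U\}$ with $\int_\Omega W(\nabla U_k)\to\int_\Omega QW(\nabla U)$. Because $W_0^{1,p}\subset W^{1,p}_\phi$ and $W^{1,p}_\phi$ is linear, each $U_k$ automatically lies in $W^{1,p}_\phi$: the passage from $QW$ to $W$ costs nothing on the boundary. The paper then applies Proposition~\ref{prop:densy} \emph{once}---to each $U_k$---and closes with the $W^{1,p}$-continuity of $V\mapsto\int_\Omega W(\nabla V)$, which follows directly from (W2). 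Your Layers~1--2 (density to piecewise affine, then triangle-by-triangle perturbation via \eqref{2004262119}) effectively reprove this relaxation step by hand.

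Your route is still correct. Layer~2 preserves membership in $W^{1,p}_\phi$ because the perturbations $\vartheta_{k,\delta}$ vanish on $\partial\Omega$, and Layer~3 is salvaged by applying Proposition~\ref{prop:densy} directly to $V^{(k,\delta)}\in W^{1,p}_\phi$ (rather than mollifying, which as you note destroys the constraint). This, however, invokes the density proposition twice, whereas the paper's argument invokes it once and delegates the relaxation step entirely to the cited theorem.
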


We prove Theorem \ref{thm} in Section \ref{s:t:pf}. Since the $\varepsilon$-dependence of $E_\varepsilon$ appears only in its domain $\mathcal B_\phi^\e$, proving a $\Gamma$-limit result reduces to proving a relaxation result, i.e., finding the lower semi-continuous envelope of 
\begin{equation*} 
  U \mapsto \int_\Omega W(\nabla U)
\end{equation*}
in the right functional framework. There is a large literature on such relaxation problems; in Section \ref{s:t:classical} we cite the relevant classical theory. While this theory gives a useful roadmap for proving Theorem \ref{thm}, it does not capture Theorem \ref{thm} because of the periodic boundary condition in $\mathcal B_\varepsilon^\phi$. Therefore, in Section \ref{s:t:pf} we give the skeleton of the proof of Theorem \ref{thm} based on the classical theory, and identify the missing steps as technical lemmas which we prove in Section \ref{s:techs}.

\subsection{Classical relaxation result on $\mathcal E$}
\label{s:t:classical}
We review some classical relaxation results as preparation for proving Theorem \ref{thm}. All theorem references below in this section refer to Dacorogna's book \cite{Dac08}. Another relevant reference is \cite{AcerbiFusco84}.

We recall that $\Omega$ is a bounded Lipschitz domain. 
Here and in what follows we adopt the
Frobenius norm for matrices.
Let $\cW : \R^{2 \times 2} \to \R$ satisfy
\begin{itemize}
  \item[($W 1$)] $p$\emph{-growth}. $\exists \, C, C' > 0 \ \forall \, A \in \R^{2 \times 2} :
        \dfrac{1}{C}|A|^p - C \leq \cW(A) \leq C' (|A|^p + 1)$;
  \item[($W 2$)] \emph{Continuity estimate}. $\exists \, C > 0 \ \forall \, A, B \in \R^{2 \times 2} : \dfrac{|\cW(A) - \cW(B)|}{|A - B|} \leq C (|A|^{p-1} + |B|^{p-1} + 1)$.
\end{itemize}

Note that ($W 1$) includes a uniform bound from below, and that ($W 2$) provides a local Lipschitz estimate. We set
\begin{equation*} 
  \cE : L^p( \Omega; \R^2 ) \to \R, \qquad  
  \cE (U)
  = \left\{ \begin{aligned}
    & \int_\Omega \cW(\nabla U)
    && U \in W^{1,p}(\Omega) \\
    & \infty
    && U \notin W^{1,p}(\Omega).
  \end{aligned} \right.
\end{equation*}
Here and henceforth, we remove the range $\R^2$ from the notation of the function space if there is no danger for confusion.

As preparation, we cite Theorem 6.9 for the alternative characterization of $(\ref{2004262117})$ as the quasiconvexification of $\cW$ given by
\begin{equation}\label{2004262119}
  Q\cW (A) = \inf\left \{ |\omega|^{-1}\int_{\omega}\cW(A+\nabla\vartheta) : 
  \vartheta\in W^{1,\infty}_0(\omega,\mathbb{R}^2) \right\},
\end{equation}
where $\omega$ is a subset of $ \R^2$
with $|\partial\omega|=0$ (i.e., $\partial\omega$ has zero two-dimensional volume).

Since $\cW$ satisfies ($W 1$), Theorem 9.1 implies that for all $U \in W^{1,p}(\Omega)$ there exists a sequence $(U_{k})_k \subset W_0^{1,p}(\Omega) + \{U\}$ such that 
\begin{eqnarray*}
U_{k}\to U \quad \text{in } L^p(\Omega)
\end{eqnarray*}
and
\begin{eqnarray}\label{1803051356}
\mathcal E (U_k) =
\int_{\Omega} \cW(\nabla U_{k}(x)) \, dx \to \int_{\Omega} Q \cW (\nabla U(x)) \, dx \quad \text{ as } k\to\infty.
\end{eqnarray}

By the two properties of $\cW$, it follows from Theorem 6.9 and Theorem 5.3(iv) that $Q \cW$ is continuous. Then, Theorem 1.13 implies that $U \mapsto \int Q\cW (\nabla U)$ is sequentially weakly lower semicontinuous in $W^{1,p}(\Omega)$. In particular, for any $(U_{k})_k \subset W^{1,p}(\Omega)$ converging weakly in $W^{1,p}(\Omega)$ to some $U$, we have that
\begin{eqnarray}\label{1803051355}
\liminf_{k \to \infty} \mathcal E (U_k) 
\geq \liminf_{k \to \infty} \int_{\Omega} Q\cW(\nabla U_{k}(x)) \, dx 
\geq \int_{\Omega} Q\cW (\nabla U(x)) \, dx.
\end{eqnarray}

\subsection{Proof of Theorem \ref{thm}}
\label{s:t:pf}

The proof below relies on the following three statements which we make precise and prove in Section \ref{s:techs}:
\begin{itemize}
  \item Poincar\'e Inequality holds on $W^{1,p}_\phi (\Omega)$ (Lemma \ref{l:Poincare});
  \item $W$ satisfies Properties ($W1$) and ($W2$) defined in Section \ref{s:t:classical} (Lemma \ref{l:prop:W});
  \item $\cB_\phi^\varepsilon$ is dense in $W^{1,p}_\phi (\Omega)$ (Proposition \ref{prop:densy}).
\end{itemize}
The proof of 
Theorem $\ref{thm}$ follows from matching a lower bound with an upper bound, which is  the standard method for computing $\Gamma$-limits \cite{DalMaso93}.
To identify the set where the limit functional is finite, we first
investigate the (equi-)compactness of minimizing sequences.


\begin{proof}[Proof of Theorem \ref{thm}]
\textbf{Compactness}. By the lower bound in ($W1$), 
\begin{equation*}
  E_\varepsilon (U_\varepsilon) \geq \frac1C \| \nabla U_\varepsilon \|_{L^p(\Omega)}^p - C
\end{equation*}
for some constants $C > 0$ independent of $\e$ and $U_\e$, and thus any finite-energy sequence $(\nabla U_\varepsilon)_\varepsilon$ is bounded in $L^p(\Omega)$. By the Poincar\'e Inequality (Lemma \ref{l:Poincare}), we then infer that $U_\varepsilon$ is bounded in $W^{1,p}(\Omega)$, and thus strongly convergent (along a subsequence) in $L^p(\Omega)$.

\textbf{$\Gamma$-liminf}. Since we can focus on finite-energy sequences $U_\varepsilon \to U$ in $L^p(\Omega)$, the compactness statement implies that $U_\varepsilon \weakto U$ in $W^{1,p}(\Omega)$ as $\varepsilon \to 0$. 
Since $(U_\varepsilon) \subset W^{1,p}_\phi (\Omega)$, which is a closed subspace of $W^{1,p} (\Omega)$, we also have $U \in W^{1,p}_\phi (\Omega)$. Hence, by applying \eqref{1803051355}, we obtain the required $\Gamma$-liminf estimate.
 
\textbf{$\Gamma$-limsup}. Thanks to \eqref{1803051356} it suffices to find, for any $U \in W_\phi^{1,p} (\Omega)$, a sequence $(U_\varepsilon)_\varepsilon \subset \mathcal B_\phi^\varepsilon$ such that
\begin{equation} \label{p:limp:cond}
  U_\varepsilon \xto{\varepsilon \to 0} U \ \text{ in } L^p (\Omega) 
  \quad \text{and} \quad 
   E_\e (U_\varepsilon) \xto{\varepsilon \to 0} \int_{\Omega} W(\nabla U)dx =: \cE (U).
\end{equation}
To prove \eqref{p:limp:cond}, we infer from  Proposition \ref{prop:densy} and $E_\e (U_\varepsilon) = \cE (U_\varepsilon)$ that it is enough to show that $\cE$ is continuous in $W^{1,p}(\Omega)$ at $U$. The continuity of $\cE$ follows by ($W2$), Lemma \ref{l:Poincare} and H\"older's inequality from
\begin{align*}
  | \cE(U) - \cE(V) | 
  &\leq \int_\Omega | W(\nabla U) - W(\nabla V) | \, dx \\
  &\leq C \int_\Omega \big( | \nabla U |^{p-1} + | \nabla V |^{p-1} + 1 \big) | \nabla U - \nabla V | \, dx \\
  &\leq C \left( \| U \|_{W^{1,p}(\Omega)}^{p-1} + \| V \|_{W^{1,p}(\Omega)}^{p-1} + 1 \right) \| U - V \|_{W^{1,p}(\Omega)},
\end{align*}
where $V \in W^{1,p} (\Omega)$ is arbitrary.
\end{proof}

\section{Technical steps in the proof of Theorem \ref{thm}}
\label{s:techs}

Here we state rigorously and prove the three statements mentioned at the start of Section \ref{s:t:pf}. We start with the Poincar\'e Inequality.

\begin{lem}[Poincar\'e Inequality on $W^{1,p}_\phi (\Omega)$]\label{l:Poincare}
There exists $C > 0$ such that for all $U \in W^{1,p}_\phi (\Omega)$ it holds that
\begin{equation*}
  \| U \|_{ L^p (\Omega) } \leq C \| \nabla U \|_{ L^p (\Omega) }.
\end{equation*}
\end{lem}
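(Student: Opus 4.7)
The plan is a standard compactness-and-contradiction argument, relying crucially on the fact that the boundary condition encoded in $W^{1,p}_\phi(\Omega)$ rules out all nonzero constants.

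Concretely, I would suppose the claimed inequality fails. Then one can produce a sequence $(U_n)_n \subset W^{1,p}_\phi(\Omega)$ with $\|U_n\|_{L^p(\Omega)} = 1$ and $\|\nabla U_n\|_{L^p(\Omega)} \to 0$. In particular $(U_n)_n$ is bounded in $W^{1,p}(\Omega)$, so by the Rellich--Kondrachov compact embedding, a subsequence (not relabeled) converges strongly in $L^p(\Omega)$ and weakly in $W^{1,p}(\Omega)$ to a common limit $U \in W^{1,p}(\Omega)$. The $L^p$-convergence of the gradients to $0$ forces $\nabla U = 0$ a.e., so $U$ is a.e.\ equal to some constant vector $c \in \R^2$. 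Since $W^{1,p}_\phi(\Omega)$ is weakly closed (as stated in the paragraph preceding Theorem \ref{thm}, via continuity of the trace operator together with the linearity of the pointwise constraint), we have $U \in W^{1,p}_\phi(\Omega)$.

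The key step is now to apply the boundary condition to the constant trace. For $U \equiv c$, the traces on $\Gamma_1$ and on $\Gamma_2 = R_{\pi/3}\Gamma_1$ both equal $c$, so the condition $R_\phi U(x) = U(R_{\pi/3} x)$ for a.e.\ $x \in \Gamma_1$ reduces to the linear-algebraic identity $R_\phi c = c$. Because $\phi \in \{ 2\pi/5, 2\pi/7 \}$ is not a multiple of $2\pi$, the eigenvalues $e^{\pm i\phi}$ of $R_\phi$ are distinct from $1$, so $R_\phi - I$ is invertible and $c = 0$. Hence $U = 0$, contradicting $\|U\|_{L^p(\Omega)} = \lim_n \|U_n\|_{L^p(\Omega)} = 1$.

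There is no genuine technical obstacle: the whole argument is driven by the algebraic observation that $R_\phi - I$ is invertible for the relevant angles, so that the constraint kills constants. The only ingredients imported from elsewhere are the compact embedding $W^{1,p}(\Omega) \hookrightarrow L^p(\Omega)$ and the weak closedness of $W^{1,p}_\phi(\Omega)$, both of which are standard (the latter is explicitly stated just above in the text). If one prefers a direct/quantitative proof, one could alternatively decompose $U = U_{\text{mean}} + (U - U_{\text{mean}})$, use the classical Poincar\'e--Wirtinger inequality on the triangle to control the second term, and then estimate $|U_{\text{mean}}|$ by $\|U - R_\phi^{-1} U(R_{\pi/3}\cdot)\|_{L^p(\Gamma_1)}$ together with invertibility of $R_\phi - I$ and the trace inequality; but the contradiction route above seems cleaner given the tools already available.
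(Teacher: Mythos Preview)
Your proof is correct and follows essentially the same contradiction-by-compactness route as the paper: normalize, extract a weakly convergent subsequence with vanishing gradient, identify the limit as a constant, and use that $R_\phi - I$ is invertible to force the constant to be zero. The only cosmetic difference is that the paper phrases the final step as ``$V_0 \neq 0$ implies $U \notin W^{1,p}_\phi(\Omega)$'' rather than your contrapositive ``$U \in W^{1,p}_\phi(\Omega)$ implies $c = 0$''.
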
 
\begin{proof}
We follow a standard proof by contradiction. Assuming that there exists $(U_n) \subset W^{1,p}_\phi (\Omega)$ such that
\begin{equation*}
  1 = \| U_n \|_{ L^p (\Omega) } > n \| \nabla U_n \|_{ L^p (\Omega) },
\end{equation*}
we obtain by compactness that $U_n$ converges along a subsequence (not relabelled) to $U$ strongly in $L^p(\Omega)$ and weakly in $W^{1,p}_\phi (\Omega)$. Hence, $\| U \|_{ L^p (\Omega) } = 1$ and $\| \nabla U \|_{ L^p (\Omega) } = 0$. Since $\Omega$ is connected, $U \equiv V_0$ for some constant and non-zero vector $V_0 \in \R^2$. However, for a.e.~$x \in \Gamma_1$,
\begin{equation*}
  R_\phi U (x) - U ( R_{\pi/3} x ) 
  = (R_\phi - I)V_0
  \neq 0.  
\end{equation*}
Hence, $U \notin W^{1,p}_\phi (\Omega)$, which completes the proof.
\end{proof}

\subsection{Properties of $W$}

\begin{lem} \label{l:prop:W}
$W$ defined in \eqref{W:KM17} satisfies Properties ($W1$) and ($W2$) defined in Section \ref{s:t:classical}.
\end{lem}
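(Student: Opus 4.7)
The plan is to split $W$ additively as $W(A) = W_\Phi(A) + W_\Psi(A)$ with $W_\Phi(A) := \sum_{e \in B^1} \Phi(|eA|-1)$ and $W_\Psi(A) := \Psi(\det A)$, and to verify ($W1$) and ($W2$) for each piece. Apart from one step involving the geometry of $B^1$, everything reduces to $\Phi(r)=|r|^p/p$ being smooth with $p$-growth and to the two structural hypotheses ($\Psi 1$)--($\Psi 2$) on $\Psi$.

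For the upper bound in ($W1$), on the $\Phi$-part I just use $||eA|-1|^p \leq C(|A|^p + 1)$, sum over the six bonds, and bound $|e|=1$. On the $\Psi$-part I combine ($\Psi 1$) with $b=1$ and ($\Psi 2$) to get $\Psi(a) = |\Psi(a)-\Psi(1)| \leq C(|a|^{p/2} + 1)$, then use $|\det A| \leq C|A|^2$ to obtain $W_\Psi(A) \leq C(|A|^p + 1)$. For the lower bound in ($W1$), since $\Psi \geq 0$ by ($\Psi 2$), it suffices to bound $W_\Phi$ below; the elementary inequality $||r|-1|^p \geq 2^{-p}|r|^p - 1$ reduces matters to showing
\begin{equation*}
  \sum_{e \in B^1} |eA|^p \geq c|A|^p \qquad \text{for some } c > 0.
\end{equation*}

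This is the main obstacle, and it uses the triangular-lattice geometry: the three directions $e_1, R_{\pi/3}e_1, R_{2\pi/3}e_1$ in $B^1$ span $\R^2$, hence $A \mapsto \big(\sum_{e\in B^1}|eA|^p\big)^{1/p}$ is a norm on $\R^{2\times 2}$ (it vanishes iff $A^T$ annihilates three pairwise non-parallel vectors, forcing $A=0$). By equivalence of norms on the finite-dimensional space $\R^{2\times 2}$, it dominates the Frobenius norm, giving the required inequality.

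For the continuity estimate ($W2$), on the $\Phi$-part I apply the mean value theorem to $\Phi$: since $\Phi'(r) = |r|^{p-1}\sign(r)$,
\begin{equation*}
  |\Phi(|eA|-1) - \Phi(|eB|-1)| \leq \big( (|eA|+1)^{p-1} + (|eB|+1)^{p-1} \big) \big||eA| - |eB|\big|,
\end{equation*}
and I combine this with $||eA|-|eB|| \leq |A-B|$ and $(|eA|+1)^{p-1} \leq C(|A|^{p-1}+1)$. On the $\Psi$-part I use ($\Psi 1$) with $a=\det A$, $b=\det B$, together with the bilinear identity $|\det A - \det B| \leq C(|A|+|B|)|A-B|$ (which follows from expanding the determinant) and $|\det A|^{p/2-1} \leq C|A|^{p-2}$. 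The product then has terms like $(|A|^{p-2} + |B|^{p-2} + 1)(|A|+|B|)|A-B|$; Young's inequality (using $p \geq 2$) absorbs mixed terms into $C(|A|^{p-1} + |B|^{p-1} + 1)|A-B|$, completing the estimate.

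The only genuinely nontrivial point is the norm-equivalence step for the lower bound in ($W1$); everything else is a bookkeeping exercise in combining ($\Psi 1$)--($\Psi 2$) with standard $p$-growth estimates and the bilinearity of $\det$.
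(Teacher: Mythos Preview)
Your proof is correct and follows essentially the same route as the paper's: split $W$ into its $\Phi$- and $\Psi$-parts, use $(\Psi 1)$--$(\Psi 2)$ together with $|\det A| \leq C|A|^2$ and $|\det A - \det B| \leq C(|A|+|B|)|A-B|$ for the $\Psi$-part, and straightforward $p$-growth estimates for the $\Phi$-part. The only difference is that you spell out the lower bound in ($W1$) via the norm-equivalence argument on $\R^{2\times 2}$ (using that the bond directions span $\R^2$), whereas the paper simply declares this step ``obvious''; your treatment here is a welcome clarification rather than a different approach.
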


\begin{proof}
If $\Psi \equiv 0$, then ($W1$) is obvious since $\Phi(r) = \frac1p {|}r{|}^p$. Then, since $\Psi \geq 0$ by ($\Psi 2$),
the lower bound in ($W1$) is immediate. The upper bound follows by ($\Psi 1$) from
\begin{align*}
    \Psi(\det A) 
   &= \Psi(\det A) - \Psi(\det I) \\
   &\leq C \Big( |\det A|^{\tfrac p2 - 1} + 2 \Big) |\det A - 1| \\
   &\leq C' (|A|^{p-2} + 2) (|A|^2 + 1). 
 \end{align*}
($W2$) follows by ($\Psi 1$) from 
\begin{align*}
  &|W(A) - W(B)| \\
  &\leq \sum_{e \in B^1} \big| \Phi \left( | eA | - 1 \right) - \Phi \left( | eB | - 1 \right) \big| + \big| \Psi( \det A ) - \Psi( \det B ) \big| \\
  &\leq \sum_{e \in B^1} C_p \big( | e A |^{p-1} + | e B |^{p-1} + 1 \big) \big| | e A | - | e B | \big| \\
  &\qquad + C \Big(|\det A|^{\tfrac p2 - 1} + |\det B|^{\tfrac p2 - 1} + 1 \Big) \big| \det A - \det B \big|  \\
  & \leq C' \big( |A|^{p-1} + |B|^{p-1} + 1 \big) |A - B| + 
  C'' \big( |A|^{p-2} + |B|^{p-2} + 1 \big) (|A|+|B|) |A - B|
\\  & \leq C''' \big( |A|^{p-1} + |B|^{p-1} + 1 \big) |A - B|.
\end{align*}
\end{proof}

For later use (although not necessary for the proof of 
 Theorem \ref{thm}) we elaborate on the rigidity properties of the energy density $W$. The rigidity estimate is a direct consequence of Assumptions (W1), (W2).
Lengthy computations are postponed to the Appendix.

We start with introducing the singular value decomposition 
\begin{eqnarray}\label{2005071602}
A= P_1 \Sigma P_2, \quad \text{where } \Sigma := \begin{bmatrix}
     \sigma_1 & 0 \\ 0 & \sigma_2
   \end{bmatrix} \textrm{ and } P_1,P_2 \in O(2).
\end{eqnarray}
In (\ref{2005071602}), $0 \leq \sigma_1 \leq \sigma_2$ are the ordered singular values of $A$, that is,
the square roots of the eigenvalues of the matrix $AA^T$. 
We also recall the definition of the distance function
\begin{eqnarray*}
\dist^p(A,SO(2))=\inf_{Q\in SO(2)}|A-Q|^p.
\end{eqnarray*}

\begin{lem} \label{l:W:rigid}
For $W$ defined in  (\ref{W:KM17}), there exists $c_2>0$ such that for all $A \in \R^{2 \times 2}$
\begin{eqnarray}\label{2005171442}
W(A)\geq c_2 \dist^p(A,SO(2)).
\end{eqnarray}
 
\end{lem}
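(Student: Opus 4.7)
The plan is to establish the estimate by partitioning $\R^{2\times 2}$ into three regimes, with the most delicate analysis concentrated near $SO(2)$.

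In the first regime, $|A|\geq R_0$ for some large $R_0$, the lower bound in ($W1$) gives $W(A) \geq |A|^p/C - C$, while $\dist(A, SO(2)) \leq |A| + \sqrt{2}$ forces $\dist^p(A, SO(2)) \leq C'(|A|^p + 1)$. For $R_0$ large enough these two estimates combine to give $W(A)\geq c_2 \dist^p(A, SO(2))$.

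In the second regime, $|A|\leq R_0$ with $\dist(A, SO(2))\geq \delta$, I use the characterization
\[
  W(A) = 0 \quad\Longleftrightarrow\quad A \in SO(2).
\]
This follows from ($\Psi 2$) (which forces $\det A = 1$), the strict positivity of $\Phi(r)$ for $r\neq 0$ (which forces $|eA| = 1$ for every $e \in B^1$), and the short algebraic observation that $|eA| = 1$ at the six equi-angular unit vectors forces $AA^T = I$. On the compact set $\{|A| \leq R_0,\ \dist(A, SO(2)) \geq \delta\}$ the continuous function $W$ is then bounded below by some $c_\delta > 0$, while $\dist^p$ is uniformly bounded above, so the inequality follows.

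The main regime is $|A| \leq R_0$ with $\dist(A, SO(2)) < \delta$, where I fix $\delta < 1$ so that $\det A > 0$ is automatic (the SVD formula $\dist^p(A, SO(2)) = [(\sigma_1+1)^2 + (\sigma_2-1)^2]^{p/2} \geq 1$ whenever $\det A \leq 0$ rules out the other sign). I then use the polar decomposition $A = RU$ with $R \in SO(2)$ and $U$ symmetric positive definite close to $I$; note $\dist(A, SO(2)) = |U - I|$ and $|eA| = |U R^T e|$. Setting $f_e := R^T e$ (a unit vector) and Taylor-expanding,
\[
  |eA| - 1 \,=\, f_e^T (U - I) f_e + O(|U - I|^2).
\]
The arithmetic heart of the proof is the auxiliary lemma, to be proved in the appendix: for every symmetric $V \in \R^{2 \times 2}$,
\[
  \sum_{e \in B^1} |f_e^T V f_e|^p \,\geq\, c_p\, |V|^p,
\]
which follows from the injectivity of the linear map $V \mapsto (f_{e_k}^T V f_{e_k})_{k=0}^{5}$ from the 3-dimensional space of symmetric matrices to $\R^6$ (a short algebraic check with three of the $e_k$ at angles $0, \pi/3, 2\pi/3$), combined with homogeneity and compactness on the unit sphere of symmetric matrices. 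Applied with $V = U - I$, the lemma yields $\sum_{e} \Phi(|eA|-1) \geq (c_p/p)\,|U - I|^p + \text{h.o.t.}$, and for $\delta$ small enough the leading term dominates the remainder, giving $W(A) \geq c_2 \dist^p(A, SO(2))$.

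The main obstacle is the third regime. The polar decomposition neatly absorbs any rotational ambiguity so that no separate treatment of the tangent direction to $SO(2)$ is necessary, but the uniform control of the Taylor remainder and the proof of the quadratic-form lemma (which encodes the six-fold hexagonal symmetry of $B^1$) are the lengthy computations deferred to the appendix.
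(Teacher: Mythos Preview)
Your proof is correct, but it takes a genuinely different route from the paper's.

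The paper splits only on the sign of $\det A$. For $\det A\geq 0$ it uses the singular value decomposition $A=P_1\Sigma P_2$ and proves directly, via an explicit trigonometric computation (the appendix lemma), the \emph{global} estimate
\[
14\sum_{k=0}^{5}\bigl(|\Sigma R_{\theta+k\pi/3}e_1|-1\bigr)^2\;\geq\;(\sigma_1-1)^2+(\sigma_2-1)^2
\;=\;\dist^2(A,O(2))\;=\;\dist^2(A,SO(2)),
\]
and then upgrades to exponent $p$ by Jensen. No Taylor expansion or compactness is used; the constant $1/14$ is explicit. For $\det A<0$ it argues essentially as your Regime~1 (large $|A|$) together with a separate use of $(\Psi2)$ to get a positive lower bound on the bounded part, similarly to your Regime~2.

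Your three-regime scheme (growth at infinity; compactness on a shell using $W(A)=0\Leftrightarrow A\in SO(2)$; Taylor expansion near $SO(2)$ with the quadratic-form lemma) is a standard soft-analysis substitute for the paper's hard computation. Two small points to tighten in your Regime~3: (i)~since $f_e=R^Te$ depends on $A$ through $R$, your auxiliary estimate $\sum_e|f_e^TVf_e|^p\geq c_p|V|^p$ must hold with a constant uniform in $R$; this follows immediately from rotation invariance, since $f_e^TVf_e=e^T(RVR^T)e$ and $|RVR^T|=|V|$, reducing to the fixed $B^1$ case; (ii)~the control of the Taylor remainder is cleanest via the $\ell^p$ reverse triangle inequality on the vector $(f_e^T(U-I)f_e+r_e)_e$. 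Your approach is more robust (it would survive replacing $\Phi$ by any potential with $\Phi(r)\sim|r|^p$ near $0$), whereas the paper's yields an explicit constant and avoids any local analysis.
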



\begin{proof}
Let $A =P_1\Sigma P_2 $ be the singular decomposition as in \eqref{2005071602}. We split two cases;  $\det A \ge 0$ and $\det A < 0$. We start with the first case. From the definition of $W$ we get
\begin{eqnarray}\label{2005071448}
W(A) \geq  \frac{1}{p} \sum_{e \in B_1}
  \bigl | |Ae| - 1  \bigr | ^p = \frac{1}{p}\sum_{e \in B_1}  \bigl | |\Sigma P_2 e| - 1  \bigr | ^p
=  \frac{1}{p}\sum_{k=0}^5   \bigl | |\Sigma R_{\theta + k\pi/3} e_1| - 1   \bigr |^p, 
\end{eqnarray}
where $e_1$ is the unit vector, $R$ is the rotation matrix, and $\theta \in [0, \pi/3)$ is fixed by $P_2$. 
We first consider the case $p=2$.
Thanks to Lemma \ref{2005071702}
we have
\begin{eqnarray*}
14 \sum_{k=0}^5   \bigl ||\Sigma R_{\theta + k\pi/3} e_1| - 1 \bigr |^2
   \geq (\sigma_1 - 1)^2 + (\sigma_2 - 1)^2.
\end{eqnarray*}
Recalling  the  well-known relation
$$
  (\sigma_1 - 1)^2 + (\sigma_2 - 1)^2 = \dist^2(A,O(2))
$$ 
and noting that $\det A \geq 0$ implies $\dist (A, O(2)) = \dist (A, SO(2))$, we obtain \eqref{2005171442}. 
%
%
For the case $p>2$, applying Jensen's inequality in \eqref{2005071448} yields 
\begin{eqnarray*} 
W(A)  
   \geq\frac{6^{1-\frac{p}{2}}}{p} \Bigl  |  \sum_{k=0}^5 (|\Sigma R_{\theta + k\pi/3} e_1| - 1)^2  \Bigr |^{\frac{p}{2}}.
\end{eqnarray*} 
Then, (\ref{2005171442}) follows from the argument above.


\medskip

We continue with the second case, $\det A < 0$. By $(W1)$ in Section \ref{s:t:classical}, there exist $M > 0$ independent of $A$ such that $W(A) \geq \frac1M (|A|^p - M)$. We separate two cases: 
   \begin{enumerate}
    \item If $|A| \leq 2M$, then $\det A \geq - 2M^2$, and thus 
    \[ W(A) 
       \geq \Psi (\det A) 
       \geq \min_{[-2M^2, 0]} \Psi, \]
    which is positive by $(\Psi 2)$. On the other hand, for the right-hand side of \eqref{2005171442}, we obtain
    \[ \dist^p (A, SO(2)) \leq \max_{|B| \leq 2M} \dist^p (B, SO(2)) < \infty. \] 
    Hence, there exists $c_2 > 0$ independent of $A$ such that \eqref{2005171442} holds.
    \item If $|A| > 2M$, we note from the triangle inequality 
    \[ \dist(A, SO(2)) \leq |A| + \dist(0, SO(2)) = |A| + \sqrt 2 \]
    that
    \[ W(A) 
      \geq \frac1M |A|^p - M
      \geq c_2 (|A| + \sqrt 2)^p
      \geq c_2 \dist^p(A, SO(2)) \]
      for some $c_2 > 0$ independent of $A$.
  \end{enumerate}
\end{proof}

\subsection{Density of $\cB_\phi^\varepsilon$ in $W^{1,p}_\phi (\Omega)$}

\begin{prop}[Density of $\cB_\phi^\varepsilon$ in $W^{1,p}_\phi (\Omega)$] \label{prop:densy}
For all $U \in W^{1,p}_\phi (\Omega)$ there exists $U_\varepsilon \in B_\phi^\varepsilon$ parametrized by $\frac1\varepsilon \in \N$ such that $\| U - U_\varepsilon \|_{W^{1,p} (\Omega)} \to 0$ as $\varepsilon \to 0$.
\end{prop}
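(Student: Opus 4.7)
The plan is to approximate $U \in W_\phi^{1,p}(\Omega)$ first by smooth maps that still satisfy the rotational boundary condition on $\Gamma_1$, then to take their piecewise linear nodal interpolation on the triangular lattice $\mathcal L_\varepsilon$, and finally to close the argument by a diagonal extraction. The core difficulty lies in the smoothing step: the constraint defining $W^{1,p}_\phi(\Omega)$ couples the trace of $U$ on $\Gamma_1$ to its trace on $\Gamma_2 := R_{\pi/3}\Gamma_1$, so a plain mollification would break it.

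My first step is to turn this non-local constraint into a genuine local rotational symmetry by extending $U$ to a neighbourhood of $\Omega$. On the enlarged open set $\tilde\Omega := R_{-\pi/3}\Omega \cup \Omega \cup R_{\pi/3}\Omega$ I define $\tilde U$ by $\tilde U(R_{k\pi/3} y) := R_{k\phi}\, U(y)$ for $y \in \Omega$ and $k \in \{-1,0,1\}$; the hypothesis $U \in W^{1,p}_\phi(\Omega)$ guarantees that the traces of $\tilde U$ from either side of $\Gamma_1$ (and, similarly, of $\Gamma_2$) agree, so that $\tilde U \in W^{1,p}(\tilde\Omega;\R^2)$. Extending $\tilde U$ across the remaining part of $\partial\tilde\Omega$ (which lies at positive distance from $\Gamma_1\cup\Gamma_2$) by a standard Lipschitz-domain extension operator, I obtain an extension, still denoted $\tilde U$, in $W^{1,p}(\R^2;\R^2)$. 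I then mollify with a radially symmetric kernel $\eta_\delta$ supported in $B_\delta(0)$ and set $U_\delta := \tilde U * \eta_\delta \in C^\infty(\R^2;\R^2)$. Changing variables $w = R_{-\pi/3} z$ together with the radial symmetry $\eta_\delta(w) = \eta_\delta(R_{\pi/3} w)$ and the local identity $\tilde U(R_{\pi/3}\,\cdot) = R_\phi \tilde U(\cdot)$ (valid on a $\delta$-neighbourhood of $\Gamma_1$ in $\tilde\Omega$ provided $\delta$ is small enough) yields the pointwise identity $U_\delta(R_{\pi/3} x) = R_\phi U_\delta(x)$ for every $x \in \Gamma_1$, while the standard estimate $\|U_\delta - U\|_{W^{1,p}(\Omega)} \to 0$ as $\delta \to 0$ holds as well.

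For the discretisation I use nodal interpolation: given such a smooth $U_\delta$, let $U_{\delta,\varepsilon}:\overline\Omega \to \R^2$ be the $\mathcal L_\varepsilon$-piecewise linear map with $U_{\delta,\varepsilon}(x_i) = U_\delta(x_i)$ for every $x_i \in \mathcal V_\varepsilon$. The geometry of $\mathcal L_\varepsilon$ (namely, the fact that $\Gamma_1$ aligns with a lattice direction and $\varepsilon^{-1}\in\N$) ensures that $R_{\pi/3} x_i \in \mathcal V_\varepsilon$ whenever $x_i \in \Gamma_1 \cap \mathcal V_\varepsilon$, so the pointwise symmetry of $U_\delta$ passes to the nodes and $U_{\delta,\varepsilon} \in \mathcal B_\phi^\varepsilon$. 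For each fixed $\delta$, standard piecewise-linear interpolation estimates on quasi-uniform triangulations give $U_{\delta,\varepsilon} \to U_\delta$ in $W^{1,p}(\Omega)$ as $\varepsilon \to 0$; a diagonal choice $\delta = \delta(\varepsilon) \to 0$ sufficiently slowly then yields the desired sequence $U_\varepsilon := U_{\delta(\varepsilon),\varepsilon} \in \mathcal B_\phi^\varepsilon$ with $\|U_\varepsilon - U\|_{W^{1,p}(\Omega)} \to 0$.

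The main obstacle I anticipate is precisely this preservation of the boundary condition under smoothing: the combination of the rotational extension with a radially symmetric mollifier converts the non-local trace constraint into a condition that commutes with convolution, which is what makes the whole argument go through. Some care is also required to ensure that the $W^{1,p}$-extension across $\Gamma_3$ and across the far boundaries of $R_{\pm\pi/3}\Omega$ does not interfere with the symmetry near $\Gamma_1 \cup \Gamma_2$, but this is handled by taking $\delta$ smaller than half the distance from $\Gamma_1 \cup \Gamma_2$ to the region where the uncontrolled extension has been performed.
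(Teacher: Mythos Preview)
Your strategy---extending $U$ equivariantly to $\tilde\Omega = R_{-\pi/3}\Omega \cup \Omega \cup R_{\pi/3}\Omega$, extending further to $\R^2$, and mollifying with a radial kernel---is the natural first attempt, and the computation showing that radial convolution commutes with the equivariance is correct wherever the equivariance is available. But your parenthetical claim that $\partial\tilde\Omega$ lies at positive distance from $\Gamma_1\cup\Gamma_2$ is false: both endpoints $0$ and $e_1$ of $\Gamma_1$ lie on $\partial\tilde\Omega$. The relation $\tilde U(R_{\pi/3}y)=R_\phi\tilde U(y)$ holds only for $y$ in the rhombus $R_{-\pi/3}\Omega\cup\Omega$ (already for $y\in R_{\pi/3}\Omega$ it would require knowing $\tilde U$ on $R_{2\pi/3}\Omega$), and that rhombus has interior angle $2\pi/3$ at both $0$ and $e_1$. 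Hence for any $\delta>0$ and any $x\in\Gamma_1$ with $\min\{|x|,|x-e_1|\}<c\delta$, the ball $B_\delta(x)$ leaks into the region where the relation is not guaranteed, so in general $U_\delta(R_{\pi/3}x)\neq R_\phi U_\delta(x)$ and the nodal interpolant is not in $\mathcal B_\phi^\varepsilon$.

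The issue at $e_1$ can plausibly be repaired by arranging the outer extension symmetrically across $\Gamma_3$ and $R_{-\pi/3}\Gamma_3$. At the origin, however, the obstruction is intrinsic to the disclination: iterating the relation six times gives $(I-R_{6\phi})\tilde U=0$, and since $6\phi\notin 2\pi\Z$ this forces $\tilde U\equiv 0$ on any full neighbourhood of $0$ where the relation holds---so no single-valued extension can make convolution preserve the constraint at the core. The paper confronts exactly this difficulty and therefore avoids bulk mollification. It passes to the trace, sends $\Omega$ to the disc $\Dd$ by a bi-Lipschitz map (so $\Gamma$ becomes $\Ss$ and the corners disappear), and proves density of $C^\infty_\phi(\Ss)$ in $W^{1-1/p,p}_\phi(\Ss)$ by splitting the trace and using \emph{translations} along $\Ss$ to push supports away from the junction $\theta=0$ before mollifying. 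The smooth approximate trace is extended harmonically to produce $U_1\in C^1(\Omega)\cap\Lip_\phi(\overline\Omega)$, giving a decomposition $U=U_1+U_2+U_3$ with $U_2\in W^{1,p}_0(\Omega)$ and $\|U_3\|_{W^{1,p}}$ small; your nodal-interpolation step then applies to $U_1$ and $U_2$ separately.
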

Before giving the proof of Proposition \ref{prop:densy} at the end of this section, we first outline the idea of the proof, and then establish some technical lemmas.

In order to explain the idea of the proof, we first recall two classical density results in the following lemma. To state it, we define $\Lip_\phi (\overline \Omega) := \Lip (\overline \Omega) \cap  W^{1,p}_\phi (\Omega)$.

\begin{lem}[Density of $B_\phi^\varepsilon$] \label{lem:ET} 
 
For any $U : \Omega \to \R^2$ with either $U \in W^{1,p}_0 (\Omega)$ or $U \in C^1(\Omega) \cap \Lip_\phi (\overline \Omega)$ there exists $U_\varepsilon \in B_\phi^\varepsilon$ parametrized by $\frac1\varepsilon \in \N$ such that $\| U - U_\varepsilon \|_{W^{1,p} (\Omega)} \to 0$ as $\varepsilon \to 0$.
\end{lem}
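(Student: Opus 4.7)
The strategy is to set $U_\varepsilon := I_\varepsilon U$, the nodal interpolant of $U$ on the triangulation $\mathcal T_\varepsilon$, i.e., the unique $\mathcal L_\varepsilon$-piecewise linear map agreeing with $U$ on $\mathcal V_\varepsilon$. The proof then reduces to two tasks: (a) verifying that $I_\varepsilon U \in \mathcal B_\phi^\varepsilon$, and (b) establishing the $W^{1,p}$-convergence $I_\varepsilon U \to U$. Since nodal evaluation requires pointwise values of $U$, the two regularity classes in the hypothesis must be treated differently.

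For $U \in C^1(\Omega) \cap \Lip_\phi(\overline\Omega)$, Lipschitz regularity on $\overline\Omega$ makes $I_\varepsilon U$ well-defined. To verify the boundary condition in $\mathcal B_\phi^\varepsilon$, I would use that both sides of $R_\phi U_\varepsilon(x) = U_\varepsilon(R_{\pi/3} x)$ are piecewise linear in $x \in \Gamma_1$ with breakpoints at the nodes $\Gamma_1 \cap \mathcal V_\varepsilon$, whose images under $R_{\pi/3}$ are exactly the nodes on $\Gamma_2$. Hence the identity reduces to the nodal one $R_\phi U(x_i) = U(R_{\pi/3} x_i)$, which follows from the $W^{1,p}_\phi$-trace condition (a.e.\ on $\Gamma_1$) combined with continuity of $U$ on $\overline\Omega$. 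For convergence, the gradient $\nabla I_\varepsilon U|_T$ on each $T \in \mathcal T_\varepsilon$ is a fixed linear combination of edge differences $\varepsilon^{-1}(U(x_j) - U(x_i)) = \varepsilon^{-1}\int_0^\varepsilon e \cdot \nabla U(x_i + t e)\,dt$, each bounded by $\Lip(U)$. Since $\nabla U$ is continuous on $\Omega$, it is uniformly continuous on any compact $K \subset \Omega$, giving $\nabla I_\varepsilon U \to \nabla U$ uniformly on $K$ and hence in $L^p(K)$; the residual contribution on $\Omega \setminus K$ is controlled uniformly in $\varepsilon$ by $C\,\Lip(U)^p |\Omega \setminus K|$, which vanishes as $K \nearrow \Omega$.

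For $U \in W^{1,p}_0(\Omega)$ nodal values are undefined, so I would first mollify. Extending $U$ by zero to $\R^2$ preserves $W^{1,p}$-regularity thanks to the zero boundary trace, and $U_\delta := \rho_\delta * U \in C_c^\infty(\R^2)$ satisfies $\supp U_\delta \Subset \Omega$ for $\delta$ small and $U_\delta \to U$ in $W^{1,p}(\Omega)$. For $\varepsilon < \dist(\supp U_\delta, \partial\Omega)$, the interpolant $I_\varepsilon U_\delta$ vanishes near $\partial\Omega$, so the boundary condition in $\mathcal B_\phi^\varepsilon$ is trivially satisfied. Standard finite-element interpolation estimates on the shape-regular mesh $\mathcal T_\varepsilon$ yield $\| I_\varepsilon U_\delta - U_\delta \|_{W^{1,p}(\Omega)} \leq C \varepsilon \| \nabla^2 U_\delta \|_{L^p}$ for each fixed $\delta$, after which a diagonal extraction $\varepsilon = \varepsilon(\delta) \to 0$ delivers the required sequence in $\mathcal B_\phi^\varepsilon$.

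The main obstacle is confined to the $C^1 \cap \Lip_\phi$ case: upgrading the a.e.-trace identity on $\Gamma_1$ to a pointwise identity at every lattice node $x_i$. This is the precise point at which continuity of $U$ up to $\overline\Omega$, rather than mere membership in $W^{1,p}_\phi$, is essential, and it explains why the density statement for general $U \in W^{1,p}_\phi(\Omega)$ (Proposition \ref{prop:densy}) does not follow directly but requires the additional intermediate approximation strategy hinted at in Section \ref{s:techs}. The interpolation-error estimates underlying (b) in both cases are classical.
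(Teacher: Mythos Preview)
Your proposal is correct and follows essentially the same route as the paper's proof: nodal interpolation $U_\varepsilon(x_i):=U(x_i)$, reduction of the $W^{1,p}_0$ case to $C_c^\infty(\Omega)$ by mollification/density, and in the $C^1\cap\Lip_\phi$ case the interior/boundary-layer splitting with uniform convergence of $\nabla I_\varepsilon U$ on compacts and a Lipschitz bound on the thin remainder. Your explicit verification that the a.e.\ trace identity upgrades to a pointwise nodal identity via continuity of $U$ on $\overline\Omega$ is a point the paper leaves implicit; otherwise the arguments match.
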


\begin{proof}
This is a standard result in numerical analysis; see e.g.~\cite[Chap.~X, Prop.~2.1, 2.6 and 2.9]{EkelandTemam99}. We give the details of the proof to show how our boundary condition fits in.

For $U \in W^{1,p}_0 (\Omega)$, it is not restrictive by density to assume that $U \in C^\infty_c (\Omega)$. Then, setting $U_\varepsilon (x_i) := U(x_i)$ with piecewise affine continuation, it is obvious that $U_\varepsilon \in B_\phi^\varepsilon$ and that $\nabla U_\varepsilon \to \nabla U$ uniformly as $\varepsilon \to 0$.

For $U \in C^1(\Omega) \cap \Lip_\phi (\overline \Omega)$, we set again $U_\varepsilon (x_i) := U(x_i)$ with piecewise affine continuation, and define $V_\varepsilon := U_\varepsilon - U$. Since $U \in C (\overline \Omega)$, we have $\| V_\varepsilon \|_\infty \to 0$, and thus $\| V_\varepsilon \|_{L^p(\Omega)} \to 0$. For the gradient, we split $\Omega = \Omega_\delta \cup N_\delta$, where the disjoint sets $\Omega_\delta$ and $N_\delta$  are such that  $\overline{ \Omega_\delta } \subset \Omega$ and $N_\delta$ has volume that vanishes as $\delta \to 0$. Then, 
\begin{equation*}
  \| \nabla V_\varepsilon \|_{L^p(\Omega)}^p
  = \| \nabla V_\varepsilon \|_{L^p(\Omega_\delta)}^p + \| \nabla V_\varepsilon \|_{L^p(N_\delta)}^p.
\end{equation*}
First, since $\nabla U \in C (\overline{ \Omega_\delta })$, we have by the argument above that $\| \nabla V_\varepsilon \|_{C (\overline{ \Omega_\delta })} \to 0$, and thus $\| \nabla V_\varepsilon \|_{L^p(\Omega_\delta)}^p \to 0$ as $\e \to 0$. Second, since $U \in \Lip_\phi (\overline \Omega)$, $\| \nabla U_\e \|_\infty \leq \| \nabla U \|_\infty < \infty$, and thus $\| \nabla V_\varepsilon \|_{L^p(N_\delta)}^p \leq C |N_\delta|$ uniformly in $\e$. Hence, by taking $\e$ small enough with respect to $\delta$ and $\delta \to 0$, we conclude that $\| \nabla V_\varepsilon \|_{L^p(\Omega)} \to 0$ as $\e \to 0$.
\end{proof}

Thanks to Lemma \ref{lem:ET}, the proof of Proposition \ref{prop:densy} narrows down to constructing a decomposition $U = U_1 + U_2 + U_3$ where
\begin{equation} \label{Ui:props}
  \left\{ \begin{aligned}
    & U_1 \in C^1({\Omega}) \cap \Lip_\phi (\overline{\Omega}), \\
    & U_2 \in W_0^{1,p}(\Omega), \\
    & \| U_3 \|_{W^{1,p}(\Omega)} \text{ is small.} 
  \end{aligned} \right.
\end{equation}
Indeed, if such a decomposition exists, then Lemma \ref{lem:ET} provides approximations in $B_\phi^\varepsilon$ of $U_1$ and $U_2$, and $U_3$ can simply be approximated by $0$. The difficulty in constructing such a decomposition is in finding a $U_1$ for which $U_1 |_\Gamma$ is sufficiently close to $U |_\Gamma$. Approximation by convolution does not work directly since $U_1$ has to satisfy the boundary condition in $\Lip_\phi (\overline \Omega)$. Instead, we use the Trace Theorem to approximate $U |_\Gamma$ in an appropriate function space on $\Gamma$. This approximation is based on convolution, but care is needed because of the boundary condition, the corners of $\Gamma$ and the fact that the norm of the function space on $\Gamma$ is nonlocal.  
\medskip

Next, we prepare for proving Proposition \ref{prop:densy} by citing a Trace Theorem (Lemma \ref{lem:Trace}) and proving a density result on $\Gamma$ (Lemma \ref{lem:apx:on:Gamma}). To avoid technical difficulties with the corners in $\Gamma$, we first transform $\Omega$ to the unit disc $\Dd = \{ x \in \R^2 : |x| < 1 \}$. With this aim, let $\varphi : \overline{\Omega} \to \overline{\Dd}$ be a related transformation (see Figure \ref{fig:varphi}) such that 
\begin{itemize}
  \item $\varphi$ is bi-Lipschitz;
  \item $\varphi \in C^1(\Omega)$;
  \item $\varphi(\Gamma_i) = \gamma_i$ for $i = 1,2,3$, where $\gamma_i \subset \partial \Dd$ are given, in terms of the polar angle coordinate, by 
  \begin{equation*}
  \gamma_1 := [0, \tfrac{2\pi}3], \quad 
  \gamma_2 := [-\tfrac{2\pi}3, 0], \quad 
  \gamma_3 := [\tfrac{2\pi}3, \tfrac{4\pi}3]; 
\end{equation*}
  \item if $U \in W_\phi^{1,p}(\Omega)$, then
  \begin{equation*}
    U \circ \varphi^{-1} \in W_\phi^{1,p}(\Dd) := \{ V \in W^{1,p}(\Dd) : V(-\theta) = R_\phi V(\theta) \: \text{ for a.e.\ } \, 0 < \theta < \tfrac{2\pi}3 \}.
  \end{equation*}
\end{itemize} 

\begin{figure}[h!]
\centering
\begin{tikzpicture}[scale=1]
    \def \n {3} 
    \def \qthree {1.7321}
    \def \a {1.0525}
    \def \b {0.6562}
    \def \c {0.8229}
    
    \draw[->] (-.5,0) -- (\n + 1.5,0);
    \draw[->] (0,-.5) -- (0, \n*\qthree/2 + \qthree/2 + .5);
    \draw[thick] (0,0) -- (\n + 1,0) node[midway, below]{$\Gamma_1$} node[below]{$1$} -- (.5*\n + .5, \n*\qthree/2 + \qthree/2) node[midway, right]{$\Gamma_3$} -- cycle node[midway, left]{$\Gamma_2$};
    \draw (\n - 1, 2*\qthree/3) node {$\Omega$};
    
    \draw[thick, ->] (\n+1.5, \qthree) --++ (2.5,0) node[midway, above] {$\varphi$};
    
    \begin{scope}[shift={(10,\qthree)},scale=1] 
      \draw[->] (-2.5,0) -- (2.5,0);
      \draw[->] (0,-2.5) -- (0,2.5);
      \draw[thick] (0,0) circle (2);
      \draw (.5,-.5) node {$\Dd$};
      \draw (-2,0) node[anchor = south east] {$\gamma_3$};
      \draw (.7,0) arc (0:30:.7);
      
      \draw[thick] (1.8,0) --++ (.4,0);
      \begin{scope}[rotate = 15] 
        \draw (1,0) node {$\theta$};
      \end{scope}
      \begin{scope}[rotate = 30] 
        \draw[thin] (0,0) -- (2,0) node[midway, above]{$1$};
      \end{scope}
      \begin{scope}[rotate = 60] 
        \draw (2,0) node[above] {$\gamma_1$};
      \end{scope}
      \begin{scope}[rotate = -60] 
        \draw (2,0) node[below] {$\gamma_2$};
      \end{scope}
      \begin{scope}[rotate = 120] 
        \draw[dotted] (0,0) -- (2,0);
        \draw[thick] (1.8,0) --++ (.4,0);
      \end{scope}
      \begin{scope}[rotate = 210] 
      \end{scope}
      \begin{scope}[rotate = 240] 
        \draw[dotted] (0,0) -- (2,0);
        \draw[thick] (1.8,0) --++ (.4,0);
      \end{scope}
    \end{scope}
\end{tikzpicture} 
\caption{The deformation $\varphi$ and the related sections of the boundaries of $\Omega$ and $\Dd$.} 
\label{fig:varphi} 
\end{figure}
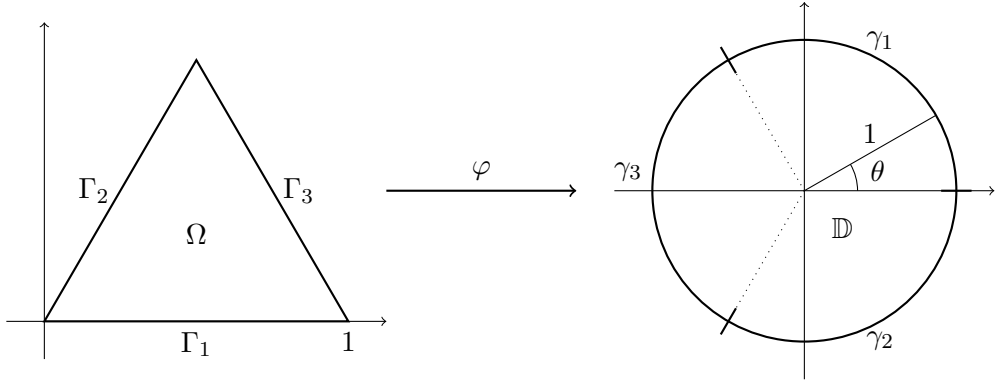

Above and in the following, we will often identify the unit circle $\Ss := \partial \Dd$ with the periodic interval $[0, 2\pi)$. We also adopt the convention that a subscript $\phi$ in a function space indicates the boundary condition. We note that there are constants $0 < c, C$ such that for all $U \in W_\phi^{1,p}(\Omega)$
\begin{equation*}
  c\| U \|_{W^{1,p}(\Omega)} \leq \| U \circ \varphi^{-1} \|_{W^{1,p}(\Dd)} \leq C\| U \|_{W^{1,p}(\Omega)}.
\end{equation*}
Hence, it is sufficient to construct the decomposition of $U$ after transforming it to $W_\phi^{1,p}(\Dd)$.

To cite the Trace Theorem, we fix $s = 1 - \frac1p$ and recall the usual norm of the fractional Sobolev space $W^{s,p}(\Ss)$:
\begin{align*}
  \| U \|_{W^{s,p}(\Ss)}^p
  &:=  \| U \|_{L^p(\Ss)}^p + [ U ]_{W^{s,p}(\Ss)}^p, \\
  [ U ]_{W^{s,p}(\Ss)}^p
  &:= \int_0^{2\pi} \int_0^{2\pi} \frac{|U(\theta) - U(\rho)|^p}{|\theta-\rho|_\Ss^p} \, d\rho d\theta,
  \qquad |\theta|_\Ss = \min_{k \in \Z} |\theta - 2\pi k|.
\end{align*} 

\begin{lem}[Trace {\cite[Thm.~1.I]{Gagliardo57}}] \label{lem:Trace}
There exists a $C > 0$ such that for all $U \in W^{1,p}(\Dd)$ 
\begin{equation*} 
  \big\| U |_\Ss \big\|_{W^{s,p}(\Ss)}
  \leq C \| U \|_{W^{1,p}(\Dd)}.
\end{equation*}
Conversely, there exists a $C > 0$ such that for all $f \in W^{s,p}(\Ss)$ there exists a $U \in W^{1,p}(\Dd)$ with $U |_\Ss = f$ such that
\begin{equation*} 
  \| U \|_{W^{1,p}(\Dd)}
  \leq C \| f \|_{W^{s,p}(\Ss)}.
\end{equation*}
\end{lem}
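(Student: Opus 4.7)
The plan is to prove the two assertions separately: the trace bound, then the existence of a bounded right inverse. For the trace bound, I would first reduce to $U \in C^\infty(\overline{\Dd}) \cap W^{1,p}(\Dd)$ by density. The $L^p(\Ss)$ portion of the norm is controlled in the standard way via polar coordinates: from $U(1,\theta) - U(r,\theta) = \int_r^1 \partial_\rho U(\rho,\theta)\,d\rho$, averaging over $r \in [\tfrac12,1]$ and applying H\"older's inequality yields $\|U|_\Ss\|_{L^p(\Ss)} \leq C\|U\|_{W^{1,p}(\Dd)}$. The delicate piece is the Gagliardo seminorm. For boundary points with $|\theta - \rho|_\Ss = \delta$, I would connect them by a path $\gamma_{\theta,\rho}$ inside $\Dd$ of length $O(\delta)$ that stays within distance $O(\delta)$ of $\Ss$ (say, a radial detour to depth $\delta$, an arc at radius $1-\delta$, then a radial return). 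The fundamental theorem of calculus along this path followed by H\"older's inequality gives $|U(\theta) - U(\rho)|^p \leq C\,\delta^{p-1} \int_{\gamma_{\theta,\rho}} |\nabla U|^p\,ds$. Plugging this into the double integral defining $[U|_\Ss]_{W^{s,p}(\Ss)}^p$ and swapping the order of integration via Fubini reduces the estimate to a geometric counting of which paths traverse a given interior point.

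For the extension, my preferred route is the Poisson kernel: set $U(r,\theta) := (P_r * f)(\theta)$ on $\Dd$. This is harmonic with boundary trace $f$ in the $L^p$ and non-tangential senses, and $\|U\|_{L^p(\Dd)} \leq C\|f\|_{L^p(\Ss)}$ follows immediately from $\|P_r\|_{L^1(\Ss)} = 1$ by Young's inequality. For the gradient, I would use the kernel bound $|\nabla P_r(\theta)| \leq C(1-r+|\theta|_\Ss)^{-2}$ together with the identity $\nabla U(r,\theta) = \int_\Ss \nabla P_r(\theta - \rho)\,[f(\rho) - f(\theta)]\,d\rho$, which is legitimate because $\int_\Ss \nabla P_r = 0$. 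Raising to the $p$-th power, applying H\"older (splitting off one power of the kernel as an $L^1$ weight), and integrating $r\,dr\,d\theta$ over $\Dd$ reproduces via Fubini precisely the Gagliardo seminorm of $f$ on $\Ss$, yielding the desired bound. An alternative that avoids the Poisson kernel is to pass to a half-plane via a bi-Lipschitz chart and apply Gagliardo's own extension, namely $U(x_1,x_2) := \int \phi\bigl((x_1-y)/x_2\bigr) f(y)\,dy/x_2$ with a compactly supported mean-one mollifier $\phi$, then cut off and glue.

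The main obstacle I anticipate is the change-of-variables bookkeeping in both halves: matching the power $\delta^{p-1}$ produced by the path argument against the denominator $|\theta-\rho|_\Ss^{sp} = \delta^{p-1}$ of the Gagliardo seminorm in the trace bound, and matching the kernel decay against the Gagliardo weight in the extension. In both cases the algebra balances precisely because $sp = p-1$, which is the content of $s = 1 - 1/p$; any other exponent breaks the estimate. Once the geometric counting for the path family is arranged so that each interior point is visited with the correct weight, both estimates reduce to a mechanical application of Fubini and H\"older.
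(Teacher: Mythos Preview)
The paper does not prove this lemma at all: it is stated as a direct citation of the classical trace theorem \cite[Thm.~1.I]{Gagliardo57} and used as a black box. So there is no ``paper's own proof'' to compare against.

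That said, your sketch is a faithful outline of how the classical result is actually established. The path argument for the forward trace bound and the Poisson-kernel (or Gagliardo mollifier) extension for the right inverse are both standard, and you have correctly identified the crucial arithmetic $sp = p-1$ that makes the exponents balance in both directions. The only places where a careful write-up would need more detail are (i) the Fubini step in the trace bound, where you must verify that the total weight assigned to each interior point by the family of paths is uniformly bounded, and (ii) in the extension, checking that the harmonic $U$ really lies in $W^{1,p}(\Dd)$ and not merely in $W^{1,p}_{\mathrm{loc}}$, which requires the integrability of $|\nabla U|^p$ up to the boundary and is exactly what your kernel estimate delivers. None of this is a gap in your reasoning; it is just the bookkeeping you already flagged as the main obstacle.
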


In order to prove a density result in $W_\phi^{s,p}(\Ss)$, we first recall two technical lemmas:

\begin{lem}[{\cite{DiNezzaPalatucciValdinoci12}, Lem.~5.2}] \label{lem:Hhalf:even}
If $f \in W^{s,p}(0,\pi)$, then the even extension $\tilde f$ satisfies
\begin{equation*}
  \| \tilde f \|_{W^{s,p}(-\pi,\pi)} \leq C \| f \|_{W^{s,p}(0,\pi)}
\end{equation*}
for some universal constant $C > 0$.
\end{lem}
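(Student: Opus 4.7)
The plan is to bound the $W^{s,p}$-norm of $\tilde f$ in two pieces: the $L^p$-component and the Gagliardo seminorm. The $L^p$-part is immediate from evenness, since $\|\tilde f\|_{L^p(-\pi,\pi)}^p = 2\|f\|_{L^p(0,\pi)}^p$, so the entire content of the lemma lies in estimating the seminorm.

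For the Gagliardo seminorm, I would split the domain of double integration into the four quadrants determined by the signs of $x$ and $y$. Writing $I_{\sigma\tau}$ for the integral over the quadrant where $\sigma x, \tau y > 0$, the identity $\tilde f(-x) = f(x)$ yields $I_{++} = I_{--} = [f]^p_{W^{s,p}(0,\pi)}$ directly, after at most a change of variables $(x,y) \mapsto (-x,-y)$ in the second case. For the two cross terms $I_{+-} = I_{-+}$, I would substitute $y \mapsto -y$ in the negative slot so that both variables lie in $(0,\pi)$ and $\tilde f$ is replaced by $f$ in the numerator. The denominator is then $(x + y)^{1+sp}$ rather than $|x - y|^{1+sp}$, but the elementary bound $x + y \geq |x - y|$ for $x, y \geq 0$ furnishes the upper bound $I_{+-} \leq [f]^p_{W^{s,p}(0,\pi)}$. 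Summing the four contributions yields the seminorm estimate with constant $C = 4^{1/p}$.

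There is no substantive obstacle here; the argument is a standard reflection estimate, and the only point requiring care is that the comparison between $(x+y)^{1+sp}$ and $|x-y|^{1+sp}$ rests on nonnegativity of both arguments, which is automatic after the change of variables. I also remark that the specific value $s = 1 - 1/p$ enters only through the exponent $1 + sp = p$ in the denominator; the same argument works verbatim for any $s \in (0,1)$ and any $1 \leq p < \infty$, so the particular role of the trace exponent from Lemma \ref{lem:Trace} is not actually exploited.
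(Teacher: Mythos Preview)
Your argument is correct. The paper does not supply its own proof of this lemma; it simply cites \cite{DiNezzaPalatucciValdinoci12}, Lemma~5.2, and your quadrant-splitting computation is precisely the standard reflection estimate given there.
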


\begin{lem}[Continuity of the translation operator] \label{lem:trl}
The translation operator $\tau_a f (\theta) := f (\theta + a)$ is continuous in the strong $W^{s,p}(\mathbb{S})$ topology, that is, for all $f \in W^{s,p}(\mathbb{S})$,
$$
\lim_{a \to 0}\|\tau_af-f \|_{W^{s,p}(\mathbb{S})}=0. 
$$
\end{lem}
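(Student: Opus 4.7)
The plan is to reduce the problem to smooth functions via a standard density and isometry argument, and then to verify continuity on smooth functions by a direct two-regime estimate of the Gagliardo seminorm.

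First I would observe that $\tau_a$ is an isometry of $W^{s,p}(\Ss)$: both the Lebesgue measure $d\rho\,d\theta$ and the geodesic distance $|\theta-\rho|_{\Ss}$ are invariant under the simultaneous shift $(\theta,\rho)\mapsto(\theta-a,\rho-a)$, so a change of variables gives $\|\tau_a f\|_{W^{s,p}(\Ss)} = \|f\|_{W^{s,p}(\Ss)}$. Combined with the density of $C^\infty(\Ss)$ in $W^{s,p}(\Ss)$ (which is classical, see e.g.\ \cite{DiNezzaPalatucciValdinoci12}), the triangle inequality
\begin{equation*}
  \|\tau_a f - f\|_{W^{s,p}(\Ss)}
  \leq \|\tau_a(f-g)\|_{W^{s,p}(\Ss)} + \|\tau_a g - g\|_{W^{s,p}(\Ss)} + \|g-f\|_{W^{s,p}(\Ss)}
  = 2\|f-g\|_{W^{s,p}(\Ss)} + \|\tau_a g - g\|_{W^{s,p}(\Ss)}
\end{equation*}
reduces the claim to proving $\|\tau_a g - g\|_{W^{s,p}(\Ss)} \to 0$ as $a\to 0$ for arbitrary $g\in C^\infty(\Ss)$.

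For a smooth $g$, the $L^p$ part $\|\tau_a g - g\|_{L^p(\Ss)}$ tends to zero by uniform continuity of $g$. For the seminorm part, I would use the two complementary bounds
\begin{equation*}
  |(\tau_a g - g)(\theta) - (\tau_a g - g)(\rho)|
  \leq 2\|g'\|_\infty\,\min\!\bigl(|a|,\,|\theta-\rho|_{\Ss}\bigr),
\end{equation*}
obtained by writing the difference either as $[g(\theta+a)-g(\theta)] - [g(\rho+a)-g(\rho)]$ (bounded by $2\|g'\|_\infty |a|$) or as $[g(\theta+a)-g(\rho+a)] - [g(\theta)-g(\rho)]$ (bounded by $2\|g'\|_\infty |\theta-\rho|_{\Ss}$). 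Splitting the double integral according to whether $|\theta-\rho|_{\Ss}\leq |a|$ or $|\theta-\rho|_{\Ss}> |a|$, using that the denominator $|\theta-\rho|_{\Ss}^p$ cancels the numerator in the first region (giving a set of measure $O(|a|)$) and produces an integral $\int_{|a|}^{\pi} r^{-p}\,dr$ of order $|a|^{1-p}$ in the second (multiplied by $|a|^p$ from the numerator), both pieces give $[\tau_a g - g]_{W^{s,p}(\Ss)}^p \leq C\|g'\|_\infty^p |a| \to 0$.

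Choosing $g$ so that $\|f-g\|_{W^{s,p}(\Ss)}<\e/3$, and then taking $|a|$ small enough, gives $\|\tau_a f-f\|_{W^{s,p}(\Ss)}<\e$, finishing the proof. I expect no real obstacle: the only mildly delicate point is the two-regime seminorm estimate, where one must be careful to use the sharper of the two bounds on each regime in order to extract the $o(1)$-decay in $a$; otherwise a naive triangle inequality would only yield boundedness.
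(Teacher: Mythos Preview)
Your proposal is correct and follows essentially the same route as the paper: the paper's proof is a short sketch that uses the isometry property of $\tau_a$ and density of smooth functions to reduce to $g\in C^1(\Ss)$, where it simply declares the result ``obvious''. Your argument is the same reduction, with the smooth case filled in by an explicit two-regime estimate of the Gagliardo seminorm; this is more detail than strictly required (one could equally note that $C^1(\Ss)\hookrightarrow W^{s,p}(\Ss)$ continuously and that $\tau_a g\to g$ in $C^1$ by uniform continuity of $g'$), but your computation is correct and yields the sharper quantitative bound $[\tau_a g-g]_{W^{s,p}(\Ss)}^p\leq C\|g'\|_\infty^p\,|a|$.
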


\begin{proof}
The proof is standard; we give a sketch. The statement is obvious for $f \in C^1(\Ss)$. For general $f \in W^{s,p}(\mathbb{S})$, it suffices to approximate it by $\psi \in C^1(\Ss)$, and note that 
\[ \| \tau_a \psi - \tau_a f \|_{W^{s,p}(\mathbb{S})}
    = \| \tau_a (\psi - f) \|_{W^{s,p}(\mathbb{S})} 
    = \| \psi - f \|_{W^{s,p}(\mathbb{S})}.\]
\end{proof}

\begin{lem}[Approximation on $\Ss$] \label{lem:apx:on:Gamma}
$C_\phi^\infty (\Ss)$ is dense in $W_\phi^{s,p} (\Ss)$.
\end{lem}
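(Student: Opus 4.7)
The plan is to exploit the fact that the boundary condition $f(-\theta) = R_\phi f(\theta)$ on $\gamma_1$ already determines $f|_{\gamma_2}$ from $f|_{\gamma_1}$, so any $f \in W^{s,p}_\phi(\Ss)$ is equivalent to the pair $(f_1, f_3) := (f|_{\gamma_1}, f|_{\gamma_3})$. Building a smooth representative $f^n \in C^\infty_\phi(\Ss)$ then amounts to constructing smooth approximants $f_1^n, f_3^n$ on the closed arcs $\overline{\gamma_1}, \overline{\gamma_3}$ whose jets satisfy the matching required for smoothness across the three seams $\theta = 0$ and $\theta = \pm \tfrac{2\pi}{3}$. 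Differentiating the BC $k$ times and evaluating at $\theta = 0$, the identity $((-1)^k I - R_\phi) f_1^{(k)}(0) = 0$ together with $R_\phi \neq \pm I$ (which holds for $\phi \in \{\tfrac{2\pi}{5}, \tfrac{2\pi}{7}\}$) forces all derivatives of $f_1^n$ at $0$ to vanish, whereas at $\theta = \pm \tfrac{2\pi}{3}$ one only needs the linear relations $(f_3^n)^{(k)}(\tfrac{2\pi}{3}) = (f_1^n)^{(k)}(\tfrac{2\pi}{3})$ and $(f_3^n)^{(k)}(\tfrac{4\pi}{3}) = (-1)^k R_\phi (f_1^n)^{(k)}(\tfrac{2\pi}{3})$ between jets.

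To produce $f_1^n$ vanishing to infinite order at $0$, I would first multiply $f_1$ by a smooth cutoff $\rho_\delta$ that vanishes on $[0, \delta/2]$ and equals $1$ on $[\delta, \tfrac{2\pi}{3}]$, and then mollify $\rho_\delta f_1$ on a scale $\mu \ll \delta$. The key technical step is the convergence $\rho_\delta f_1 \to f_1$ in $W^{s,p}((0, \tfrac{2\pi}{3}); \R^2)$ as $\delta \to 0$. For $p > 2$, where $sp = p - 1 > 1$, the pointwise trace $f_1(0)$ exists and the BC forces $f_1(0) = 0$, so the convergence follows by a standard Hardy-type estimate. The critical case $p = 2$ (where $sp = 1$) is the crux: no pointwise trace is available, so I would change variables on $\gamma_1 \cup \gamma_2$ by setting $g := f$ on $\gamma_1$ and $g := R_\phi^{-1} f$ on $\gamma_2$; the BC is then equivalent to the plain reflection symmetry $g(-\theta) = g(\theta)$ near $0$, and Lemma \ref{lem:Hhalf:even} applied to the symmetrized function allows one to dominate the near-diagonal contribution in the Gagliardo double integral by that of an even function on a symmetric interval, which in turn vanishes under truncation by $\rho_\delta$.

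For $f_3^n$, I would mollify $f_3$ in the interior of $\gamma_3$ and modify it near the endpoints by a Whitney-Seeley extension so as to match the prescribed jets coming from $f_1^n$. Using the continuity of the translation operator (Lemma \ref{lem:trl}), the corrections needed to adjust these jets between successive $n$ are small in $W^{s,p}$. Gluing $f_1^n$, $R_\phi f_1^n(-\cdot)$ and $f_3^n$ on $\gamma_1, \gamma_2, \gamma_3$ respectively produces $f^n \in C^\infty_\phi(\Ss)$, and the triangle inequality together with the boundedness of the $R_\phi$-extension map $W^{s,p}(\gamma_1; \R^2) \to W^{s,p}(\gamma_2; \R^2)$ yields $\|f^n - f\|_{W^{s,p}(\Ss)} \to 0$. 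The main obstacle I anticipate is the critical-case argument for the cutoff convergence, where the interplay between the nonlocal Gagliardo seminorm and the twisted reflection symmetry requires the change of variable and Lemma \ref{lem:Hhalf:even}.
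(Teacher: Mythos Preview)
Your route is genuinely different from the paper's, and the critical step you flag at $p=2$ is indeed the place where the argument, as written, breaks.

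\textbf{What the paper does.} The paper never cuts off at $\theta=0$. Instead, it splits $f=f_1+f_2+f_3$ so that $f_3$ is compactly supported in the interior of $\gamma_3$ (hence trivially mollifiable), and then spends the effort on showing that $f_1$ --- defined as $f$ on $(0,\tfrac{5\pi}6)$ and $0$ on $(\tfrac{7\pi}6,2\pi)$ --- lies in $W^{s,p}(\Ss)$, i.e.\ that the \emph{zero-extension} of $f|_{(0,5\pi/6)}$ across $\theta=0$ has finite Gagliardo seminorm. The trick is algebraic: writing $f_1^\alpha(\theta)=R_\alpha f_1(-\theta)$ for $\theta<0$, one has $f_1^\phi=f$ near $0$ (hence in $W^{s,p}$ by assumption), $f_1^0$ is the even extension (in $W^{s,p}$ by Lemma~\ref{lem:Hhalf:even}), and from the linear relation
\[
f_1^\phi=\tfrac{I+R_\phi}{2}f_1^0+\tfrac{I-R_\phi}{2}f_1^\pi
\]
together with the invertibility of $I\pm R_\phi$ (this is where $\phi\neq 0,\pi$ enters), one concludes $f_1^\pi\in W^{s,p}$ and hence $f_1=\tfrac12(f_1^0+f_1^\pi)\in W^{s,p}$. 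Once $f_1\in W^{s,p}(\Ss)$ is known, the paper \emph{translates} (Lemma~\ref{lem:trl}) rather than truncates, which is painless because translation is an isometry of $W^{s,p}(\Ss)$.

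\textbf{Where your argument has a gap.} Your claim that for an even $g$ the Gagliardo contribution ``vanishes under truncation by $\rho_\delta$'' is false as stated. Take $g\equiv c\neq0$ (even). Then
\[
[(1-\rho_\delta)g]_{W^{1/2,2}}^2
=|c|^2\,[1-\rho_\delta]_{W^{1/2,2}}^2
=|c|^2\,[1-\rho_1]_{W^{1/2,2}}^2,
\]
by the scale invariance of the $H^{1/2}$ seminorm in one dimension; this does not tend to $0$. Evenness alone buys nothing here. Of course $g\equiv c$ does not come from an $f\in W^{1/2,2}_\phi(\Ss)$ (the corresponding $f$ would jump at $0$), but this only shows that the boundary condition carries extra information you have not yet used. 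What one actually needs is that $f|_{\gamma_1}$ lies in the Lions--Magenes--type subspace at the endpoint $0$ (equivalently, its zero-extension across $0$ is in $W^{s,p}$); proving that is exactly the paper's linear-combination step above. Your change of variable $g=f$ on $\gamma_1$, $g=R_\phi^{-1}f$ on $\gamma_2$ does make $g$ even, but it does not respect the Gagliardo norm: the $\gamma_1\times\gamma_2$ cross term of $f$ involves $|f(\theta)-f(\rho)|$, whereas that of $g$ involves $|f(\theta)-R_\phi^{-1}f(\rho)|$, so Lemma~\ref{lem:Hhalf:even} applied to $g$ does not control the quantity you need.

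A secondary point: ``boundedness of the $R_\phi$-extension map $W^{s,p}(\gamma_1)\to W^{s,p}(\gamma_2)$'' together with the triangle inequality does not give convergence in $W^{s,p}(\Ss)$, because the Gagliardo seminorm on $\Ss$ contains cross-arc integrals across the seams at $0$ and $\pm\tfrac{2\pi}3$ that are not seen by the norms on the individual arcs. The paper avoids this by working with functions already in $W^{s,p}(\Ss)$ throughout (the translated $f_1,f_2$), so the cross terms are controlled automatically; in your scheme you would need to argue convergence on overlapping arcs covering $\Ss$, and the arc containing $0$ brings you back to the issue above. The Whitney--Seeley matching at $\pm\tfrac{2\pi}3$ is workable but heavier than necessary; the paper's splitting makes $f_3$ compactly supported in $\gamma_3$, so no jet-matching is needed there.
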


\begin{proof}
Let $f \in W_\phi^{s,p} (\Ss)$. We split $f = f_1 + f_2 + f_3$ with $f_i \in W^{s,p} (0, 2\pi)$ such that
\begin{equation*}
  f_1 = \left\{ \begin{array}{ll}
     f &\text{on } (0, \tfrac{5\pi}6) \\
 0   &\text{on } (\tfrac{7\pi}6, 2\pi),
  \end{array} \right.
  \qquad
  f_2 = \left\{ \begin{array}{ll}
      0   &\text{on } (0, \tfrac{5\pi}6) \\ 
     f &\text{on } (\tfrac{7\pi}6, 2\pi).
  \end{array} \right.
\end{equation*}
Note that $f_3 \in W_\phi^{s,p} (\Ss)$ with $\supp f_3 \subset [\tfrac{5\pi}6, \tfrac{7\pi}6] \subset \subset \gamma_3$, and that it is not directly clear that $f_1, f_2 \in W^{s,p} (\Ss)$. We will construct approximating sequences $(\psi_{12}^k)_k, (\psi_3^k)_k \subset C^\infty(\Ss)$ of $f_1 + f_2$ and $f_3$ respectively such that $\psi^k := \psi_{12}^k + \psi_3^k \in C_\phi^\infty(\Ss)$. By construction of $f_3$, $\psi_3^k := \eta_k * f_3$ is a suitable approximating sequence, where $\eta_k$ is the usual mollifier.

To construct $\psi_{12}^k$, we first show that $f_1, f_2 \in W^{s,p} (\Ss)$. Since by construction $f_1, f_2 \in W^{s,p} (0, 2\pi)$, it is sufficient to show that $f_1, f_2 \in W^{s,p} (-r, r)$ for some $r > 0$. We start with $f_1$. For any $\theta \in \Ss$ and any parameter $0 \leq \alpha \leq \pi$, let
\begin{equation*}
  f_1^\alpha (\theta) := \left\{ \begin{array}{ll}
     R_\alpha f_1(-\theta) & -\pi < \theta < 0 \\
     f_1(\theta) & 0 < \theta < \pi.
  \end{array} \right.
\end{equation*}
Clearly, $f_1^\alpha \in W^{s,p}(0, \pi) \cap W^{s,p}(\pi, 2\pi)$ for any $\alpha$. Since $f_1^\phi (\theta) = f (\theta)$ for a.e.~$\theta \in (-\tfrac{5\pi}6, \tfrac{5\pi}6)$ and $f \in W_\phi^{s,p} (\Ss)$, we also have $f_1^\phi \in W^{s,p} (-\tfrac{5\pi}6, \tfrac{5\pi}6)$. 
Since $f_1^0$ is the even extension of $f_1 |_{(0,\pi)}$, we obtain from Lemma \ref{lem:Hhalf:even} that $f_1^0 \in W^{s,p} (-\pi, \pi)$. Hence, for the odd extension $f_1^\pi$, we find from the linear relation
\begin{equation*}
  f_1^\phi = \frac{I + R_\phi}2 f_1^0 + \frac{I - R_\phi}2 f_1^\pi
\end{equation*}
that $f_1^\pi \in W^{s,p} (-\tfrac{5\pi}6, \tfrac{5\pi}6)$. Finally,
\begin{equation*}
  f_1 = \frac{f_1^0 + f_1^\pi}2 \in W^{s,p} (-\tfrac{5\pi}6, \tfrac{5\pi}6 ).
\end{equation*}
The proof of $f_2 \in W^{s,p} (-\tfrac{5\pi}6, \tfrac{5\pi}6)$ is analogous.

Finally we construct the approximating sequences $\psi_{12}^k$. Care is needed to ensure that $\psi_{12}^k$ satisfies the boundary condition. With this aim, we first approximate $f_1$ and $f_2$ by the translations $\tau_a f_1$ and $\tau_{-a} f_2$ with $0 < a < \frac\pi6$; see Lemma \ref{lem:trl}. Note that $\tau_a f_1 + \tau_{-a} f_2 \in W_\phi^{s,p} (\Ss)$ and $(\tau_a f_1 + \tau_{-a} f_2)|_{(-a,a)} \equiv 0$. Then, we define $\psi_{12}^k := \eta_k * (\tau_{a_k} f_1 + \tau_{-{a_k}} f_2)$ for some $a_k \to 0$ such that $\supp \eta_k \subset (-a_k, a_k)$, and note that $\psi_{12}^k \in C_\phi^\infty(\Ss)$. This completes the proof.
\end{proof}

We are ready to prove Proposition \ref{prop:densy}.

\begin{proof}[Proof of Proposition \ref{prop:densy}]
\textit{Step 1: decomposition of $U$.}
Let $\delta > 0$ and $U \in W^{1,p}_\phi (\Omega)$ be given. Set $\tilde U := U \circ \varphi^{-1} \in W^{1,p}_\phi (\Dd)$ and $\tilde f := \tilde U |_\Ss$. Then, by Lemma \ref{lem:Trace} we have $\tilde f \in W_\phi^{s,p}(\Ss)$. By Lemma \ref{lem:apx:on:Gamma} we find $\tilde \psi \in C_\phi^\infty (\Ss)$ with $\| \tilde \psi - \tilde f \|_{W^{s,p} (\Ss)} < \delta$. By Lemma \ref{lem:Trace}, there exists $\tilde U_3 \in W^{1,p}_\phi (\Dd)$ with $\tilde U_3 |_\Ss = \tilde f - \tilde \psi$ and 
\[ 
  \| \tilde U_3 \|_{W^{1,p} (\Dd)} 
  \leq C \| \tilde \psi - \tilde f \|_{W^{s,p}(\Ss)} 
  \leq C \delta
\]
for some $C > 0$ independent of $\delta$.

Next, we take $\tilde U_1 \in C_\phi^\infty (\Dd)$ as the harmonic extension of $\tilde \psi$. Then, translating back to $\Omega$, we set $U_i := \tilde U_i \circ \varphi$ for $i = 1, 3$. Taking $U_2 := U - U_1 - U_3$, we observe that $U_i$ satisfy \eqref{Ui:props} for $i=1,2,3$.
\smallskip

\textit{Step 2: Construction of $U_\varepsilon$.}
By Lemma \ref{lem:ET} we find sequences $(U_1^\varepsilon)_\varepsilon, (U_2^\varepsilon)_\varepsilon$ parametrized by $\frac1\varepsilon \in \N$ such that $U_i^\varepsilon \in B_\phi^\varepsilon$ and 
\begin{equation*}
  \| U_i - U_i^\varepsilon \|_{W^{1,p}(\Omega)}
  \xto{ \varepsilon \to 0 } 0
\end{equation*}
for $i = 1,2$. Hence, setting $U_\varepsilon := U_1^\varepsilon + U_2^\varepsilon$ and taking $\varepsilon$ small enough with respect to $\delta$, we obtain
\begin{equation*}
  \| U - U_\varepsilon \|_{W^{1,p}(\Omega)}
  \leq \| U_1 - U_1^\varepsilon \|_{W^{1,p}(\Omega)}
  + \| U_2 - U_2^\varepsilon \|_{W^{1,p}(\Omega)} + \| U_3 \|_{W^{1,p} (\Omega)}
  \leq C \delta
\end{equation*}
for some $C$ independent of $U_i$, $\delta$ or $\varepsilon$. Since $\delta > 0$ is arbitrary, we conclude that $\| U - U_\varepsilon \|_{W^{1,p}(\Omega)} \to 0$ as $\e \to 0$.
\end{proof}


\section{Physical interpretation of Theorem \ref{thm}}\label{2003202308}

We show that,
despite the relaxation process implied by  $\Gamma$-convergence, the equilibrium solution of the model are necessarily stressed. This is a consequence of the rotational boundary conditions
incorporated into $W^{1,p}_{\phi}(\Omega)$ and of 
the finite penalization to 
folding
imposed by
$\Psi$. 

\begin{prop}\label{p:Egtr0}
\begin{eqnarray}
\min_{L^p(\Omega)} E(U) >0.
\end{eqnarray}
\end{prop}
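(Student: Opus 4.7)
The plan is to argue by contradiction: suppose $\min E=0$ and derive a clash between the rigidity forced by $QW(\nabla U^*)=0$ and the rotational mismatch built into the boundary condition. A minimizer $U^*\in W^{1,p}_\phi(\Omega;\R^2)$ exists by the direct method, since ($W1$) together with Lemma~\ref{l:Poincare} gives coercivity of $E$, and the classical semicontinuity inequality \eqref{1803051355} (applied to the quasiconvex, continuous integrand $QW$, with $W^{1,p}_\phi(\Omega)$ weakly closed) yields sequential weak lower semicontinuity. Then $QW(\nabla U^*)=0$ a.e.

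The key step is to identify the zero set $\{QW=0\}$ with $SO(2)$. Given $A$ with $QW(A)=0$, the infimum representation \eqref{2004262119} supplies $\vartheta_n\in W^{1,\infty}_0(\omega;\R^2)$ with $|\omega|^{-1}\int_\omega W(A+\nabla\vartheta_n)\,dx\to 0$; the rigidity inequality of Lemma~\ref{l:W:rigid} converts this into
\begin{equation*}
\int_\omega \dist^p(A+\nabla\vartheta_n,SO(2))\,dx\to 0.
\end{equation*}
The Friesecke--James--M\"uller geometric rigidity theorem on the Lipschitz domain $\omega$ then yields $R_n\in SO(2)$ with $\|A+\nabla\vartheta_n-R_n\|_{L^p(\omega)}\to 0$. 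Averaging over $\omega$ and using $\int_\omega \nabla\vartheta_n=0$ gives $|A-R_n|\to 0$, and closedness of $SO(2)$ forces $A\in SO(2)$. Applied pointwise a.e., this yields $\nabla U^*\in SO(2)$ a.e.\ on $\Omega$.

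Since $\Omega$ is connected, Reshetnyak/Liouville rigidity then gives $U^*(x)=Rx+c$ for some $R\in SO(2)$ and $c\in\R^2$. Feeding this into the boundary identity $R_\phi U^*(x)=U^*(R_{\pi/3}x)$ on $\Gamma_1$: evaluation at $x=0\in\overline{\Gamma_1}$ yields $(I-R_\phi)c=0$, so $c=0$ as $R_\phi\neq I$; evaluation at $x=te_1\in\Gamma_1$ for $t>0$ yields $R_\phi R e_1=R R_{\pi/3}e_1$, so the rotation $R^{-1}R_\phi R\in SO(2)$ agrees with $R_{\pi/3}$ on $e_1$ and hence equals $R_{\pi/3}$. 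By commutativity of $SO(2)$ this collapses to $R_\phi=R_{\pi/3}$, impossible for $\phi\in\{\tfrac{2\pi}5,\tfrac{2\pi}7\}$ since neither coincides with $\pi/3=\tfrac{2\pi}6$.

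The main obstacle is the rigidity step: the pointwise bound of Lemma~\ref{l:W:rigid} controls $W$ but not its quasiconvex envelope, and bridging this gap is exactly where the nonlinear Friesecke--James--M\"uller rigidity theorem is needed. Everything downstream is either soft (Liouville) or algebraic (the boundary incompatibility, which ultimately exploits that $\pi/3$ is not a permitted disclination angle in $\{\tfrac{2\pi}5,\tfrac{2\pi}7\}$). It is also worth noting that the volume penalty $\Psi$ plays a silent but essential role, since it is what pins the zero set of $W$ (and hence of $QW$) to $SO(2)$ rather than the larger group $O(2)$.
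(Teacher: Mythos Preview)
Your proof is correct and follows essentially the same route as the paper: both use Lemma~\ref{l:W:rigid} together with the Friesecke--James--M\"uller rigidity theorem and the vanishing-average trick to pass from $W$ to $QW$, then invoke Liouville-type rigidity to force $\nabla U^*$ to be a single rotation and derive a contradiction with the boundary condition. The only cosmetic differences are that the paper packages the first part as the quantitative bound $QW(F)\geq c\,\dist^p(F,SO(2))$ (rather than just the zero-set identification you give), and that you spell out the final algebraic incompatibility $R_\phi=R_{\pi/3}$ more explicitly than the paper does.
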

The proof of Proposition \ref{p:Egtr0} follows the lines of  \cite[Sec. 5]{KM18}. For the readers' convenience, we display the main steps.

\begin{proof}
First, we show 
\begin{equation} \label{pf:QWF:LB}
  QW(F)\geq c\dist^p(F,SO(2)),
\end{equation}
for some $c>0$.
From Lemma \ref{l:W:rigid}, we have
 $W(F)\geq c_2 \dist^p(F,SO(2))$ for every $F\in\mathbb{R}^{2\times 2}$.
Now, thanks to (\ref{2004262119}), for every $\varepsilon>0$ and every smooth open $\omega \subset \R^2$ with $|\omega|=1$, there exists a map $\vartheta_{\varepsilon}\in W^{1,\infty}_0(\omega,\mathbb{R}^{2})$
(in fact even $C^{\infty}_c(\omega,\mathbb{R}^{2})$)
  such that
    $$
QW(F)
\ge \int_{\omega} W (F + \nabla\vartheta_{\varepsilon} (x))dx-\varepsilon
\ge c_2\int_{\omega}\dist^p(F+\nabla\vartheta_{\varepsilon}(x), SO(2))dx-\varepsilon.
 $$
 We now invoke   Rigidity Theorem    3.1 \cite{FJM02} 
 (which applies for $p\ge 2$, see  \cite[Sec. 2.4]{CS06}) 
 yielding the existence of a constant $c > 0$ and a matrix 
 $R\in SO(2)$ such that
\begin{eqnarray}\label{2003262137}
c_2\int_{\omega}\dist^p(F+\nabla\vartheta_{\varepsilon}(x), SO(2))dx
\geq c \int_{\omega}|F+\nabla\vartheta_{\varepsilon}(x)-R|^p dx.
\end{eqnarray}
Now, interpreting $| \cdot |^p : \R^{2 \times 2} \to \R$ as a convex function on $\R^4$, we obtain
\begin{eqnarray}\label{2003262139}
|(F-R)+\nabla\vartheta_{\varepsilon}(x)|^{p}\geq |F-R|^{p} + p|F-R|^{p-2}(F-R):\nabla\vartheta_{\varepsilon}(x)
\end{eqnarray}
for all $x \in \omega$.
By applying (\ref{2003262139}) to  (\ref{2003262137}) we have
\begin{eqnarray*}
\int_{\omega}\dist^p(F+\nabla\vartheta_{\varepsilon},SO(2))dx \geq 
\int_{\omega}|F-R|^p dx,
\end{eqnarray*}
 because the integral of $(F-R):\nabla\vartheta_{\varepsilon}$ vanishes as $\vartheta_{\varepsilon}$ has zero boundary datum. Putting our estimates together and minimising over $R$, we get
$$
 QW(F)\ge c\int_{\omega}\dist^p(F,SO(2))dx-\varepsilon
$$ 
and the desired result in \eqref{pf:QWF:LB} follows by sending $\varepsilon\to 0$. 

Applying \eqref{pf:QWF:LB} we obtain
\begin{eqnarray*}
\min_{W^{1,p}_{\phi}(\Omega)} \mathcal{E}(U)
= \mathcal{E}(\ov U)
\geq \int_{\Omega}\dist^p(\nabla \ov U,SO(2)) dx.
\end{eqnarray*}
It is left to prove that the right-hand side is positive. Suppose instead that it is $0$.
Then, $\nabla \ov U \in SO(2)$ a.e.\ in $\Omega$. Hence, $\nabla \ov U$ is a constant rotation (see \cite{BJ87}). 
This contradicts with $\ov U \in W^{1,p}_{\phi}(\Omega)$.
\end{proof}

\section{Numerical computations}
\label{s:num}

In this section we explore numerically several energy wells of $E_\varepsilon$ and inspect whether the obtained minimizers satisfies $\det \nabla U_\varepsilon > 0$. 
We do this for the simplified setting given by $\Psi \equiv 0$, which does not penalize folded patterns. Fortunately, this simplified setting turns out to be stable enough to reproduce configurations with $\det \nabla U_\varepsilon > 0$ (see, e.g., Figure \ref{fig:discl:num}) as \textit{local} minima.
Yet, local minimizers are of limited use to our theoretical result Theorem \ref{thm}, because $\Gamma$-convergence only guarantees the convergence of \textit{global} minimizers in the limit $\varepsilon\to 0$.
Instead, we show by direct inspection of the energy values at the computed local minimizers that there is a good agreement with available continuum models for disclinations for decreasing values of $\varepsilon$.
We compute local minimizers by employing Newton's method for different initial conditions. In all computations below, Newton's method converges quadratically.

\subsection{The limit $\varepsilon \to 0$}
\label{s:num:e0}

Here we verify and quantify some of the computations in \cite{SN88}, in which disclinations satisfying $\det \nabla U_\varepsilon > 0$ are obtained. The only difference with \cite{SN88} is in the definition of $w$ in \eqref{w}; in \cite{SN88} $w \equiv 1$ is taken instead.

For both $\phi = \frac{2 \pi}5$ and $\phi = \frac{2 \pi}7$ we chose the initial condition such that the local minimizer of $E_\varepsilon$ satisfies $\det \nabla U_\varepsilon > 0$. With this aim, for $\e = 2^{-1}$ we took as initial condition one of the linear deformations $U_\e(x) = Ax$ in $B_\phi^\e$ with $\det A = 1$ (see Remark \ref{2005171600}). For the subsequent values $\e = 2^{-2}, 2^{-3},\ldots,2^{-8}$, 
 we constructed the initial condition from the minimizer obtained for the previous value of $\varepsilon$ by linear interpolation. 
The resulting local minimizers $\overline U_\e$ turn out to satisfy $\det \nabla \overline U_\varepsilon > 0$. For $\e = 2^{-3}$, $\overline U_\e$ is illustrated in Figure \ref{fig:discl:num}. The energy values $e_\e := E_\e (\overline U_\e)$ are plotted in Figure \ref{fig:Evals}. The   behavior of $e_\e$ as $\e \to 0$ is qualitatively similar to the computations of 
\cite{SN88} which are based on a model in linearized elasticity.

Next, 
we test the convergence of $e_\e$ as $\e \to 0$. Indeed, since $\overline U_\e$ need not be global minimizers, the $\Gamma$-convergence result in Theorem \ref{thm} does not guarantee convergence. To test the convergence, we impose the power law ansatz 
\begin{equation} \label{ee}
  e_\e \sim C_1 - C_2 2^p,
\end{equation}
where the constants $C_1, C_2, p > 0$ need to be fitted from the data. Without computing $C_1$ and $C_2$, we obtain the exponent $p$ of the power law numerically from
\begin{equation} \label{pe}
p_\e := \frac1{\log 2} \log \frac{ e_{4\e} - e_{2\e} }{ e_{2\e} - e_\e }.  
\end{equation}
Table \ref{tab:pvals} lists the values of $p_\e$. Since the values of $p_\e$ are positive and increasing with $-\log \e$, $e_\e$
 seems to converge as $\e \to 0$
 implying a power law behavior.


\begin{table}[h!]                    
\centering                           
\begin{tabular}{ccc} \toprule   
 & \multicolumn{2}{c}{$\phi$} \\ \cmidrule(r){2-3}                                 
$\e$ & $\tfrac{2\pi}5$ & $\tfrac{2\pi}7$ \\ \midrule                   
$2^{-3}$ & 1.552 & 1.276 \\
$2^{-4}$ & 1.712 & 1.488 \\
$2^{-5}$ & 1.812 & 1.595 \\
$2^{-6}$ & 1.866 & 1.645 \\
$2^{-7}$ & 1.898 & 1.671 \\
$2^{-8}$ & 1.918 & 1.686 \\
\bottomrule
\end{tabular}     
\medskip                   
\caption{The values of $p_\e$ (see \eqref{pe} as computed from the numerical data to test the power law in the ansatz in \eqref{ee}).}
\label{tab:pvals}           
\end{table}

\begin{figure}[h]
\centering
\begin{tikzpicture}[scale=1]
\def \x {4.2}
\def \y {2.75}
\def \xi {0}
\def \yi {0}
\def \a {7}

\node (label) at (\xi, \yi) {\includegraphics[height=6cm]{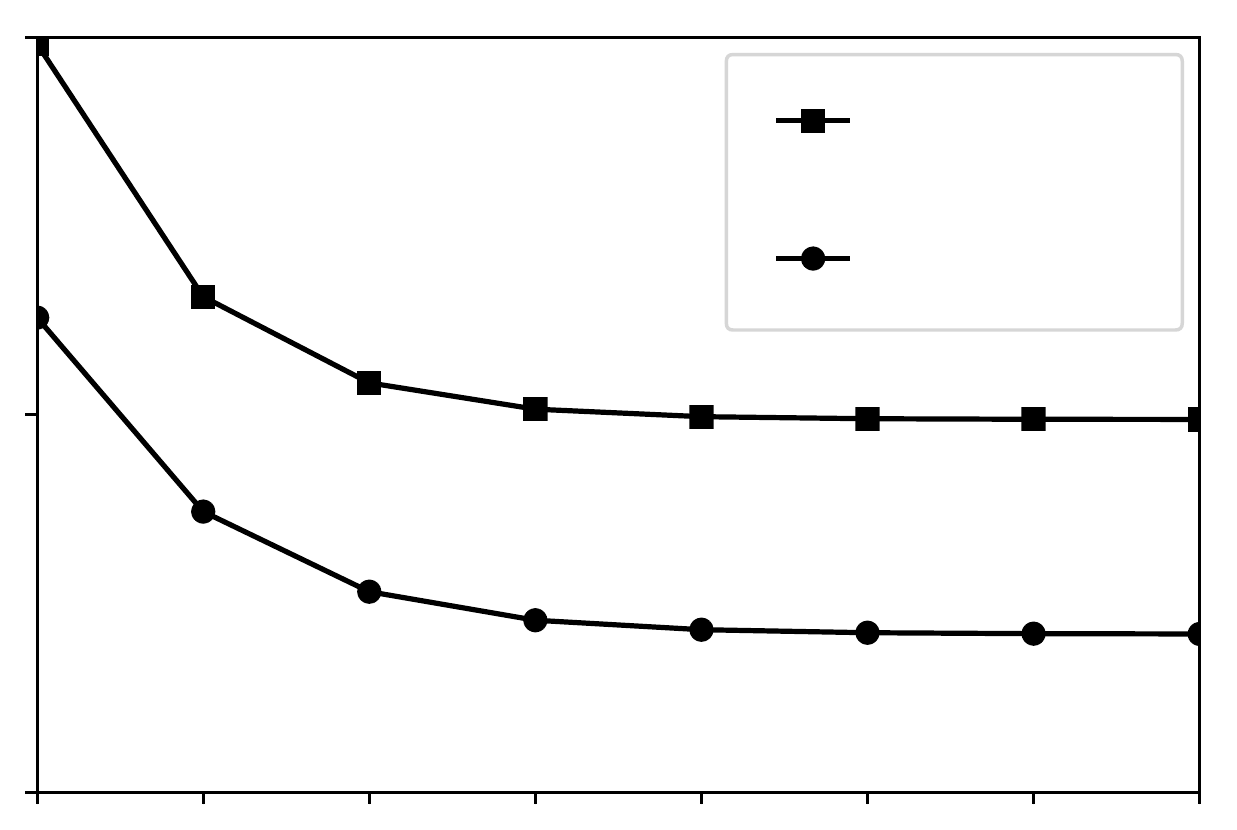}};
\draw (-\x, -\y) node[below] {$2^{-1}$}; 
\draw (-3*\x/7, -\y) node[below] {$2^{-3}$}; 
\draw (\x/7, -\y) node[below] {$2^{-5}$};
\draw (5*\x/7, -\y) node[below] {$2^{-7}$};
\draw (\x, -\y) node[anchor = south west] {$\e$}; 
\draw (-\x, -\y) node[left] {$1$};
\draw (-\x, 0) node[left] {$2$};
\draw (-\x, \y) node[left] {$3$} node[anchor = south west] {$e_\e \times 10^3$};
\draw (2, 2.13) node[right] {$\phi = \tfrac{2\pi}5$};
\draw (2, 1.13) node[right] {$\phi = \tfrac{2\pi}7$};

\end{tikzpicture}
\caption{The energy values $e_\e = E_\e (\overline U_\e)$ obtained from the simulations. Here, $\overline U_\e$ is verified to satisfy $\det \nabla \overline U_\e > 0$.}
\label{fig:Evals}
\end{figure}

\subsection{Folded configurations}
\label{s:num:fold}

In the next simulations we explore other local minimizers of the energy by starting Newton's method from other initial conditions. In particular, since we take $\Psi \equiv 0$, there is no penalisation on $\det \nabla U_\varepsilon \le 0$, and thus local minimizers with folded patterns may occur. 
%
%
%
%

With this aim, we set $\varepsilon = 2^{-2}$ and fold the reference lattice as illustrated in Figure \ref{fig:fold}. This folding procedure is done such that the following property used in the boundary condition in \eqref{Ae} 
\[ \Gamma_2 \cap \mathcal V_\varepsilon = \{ R_{\pi / 3} x : x \in \Gamma_1 \cap \mathcal V_\varepsilon \} \]
is conserved during the folding process. After folding, we construct the initial condition for Newton's method by deforming the folded reference domain by the linear map 
defined in Remark \ref{2005171600}.

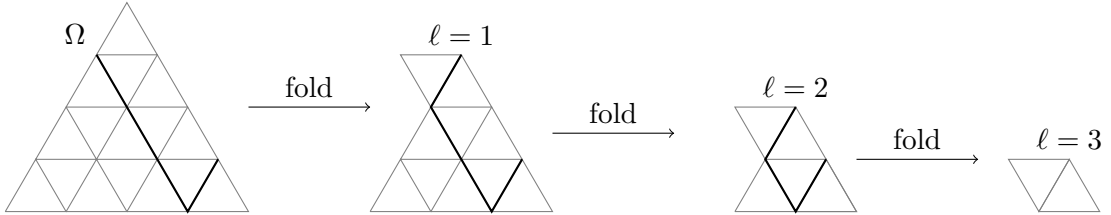
\begin{figure}[h!]
\centering
\begin{tikzpicture}[scale=.8]
    \def \n {4} 
    \def \qthree {1.7321}
    \def \a {1}
    \def \c {0.866}
    \def \b {.5}
    \def \r {.1}
    
    \begin{scope}[shift={(\a*7, 0)}]
    \foreach \i in {0,...,3} {
      \foreach \j in {\i,...,3} {
        \def \x {\i*\a + \j*\b - \i*\b}
        \def \y {\j*\c - \i*\c}
        \begin{scope}[shift={(\x, \y)}] 
          \draw[gray] (0, 0) -- (\a, 0) -- (\b, \c) -- (0,0);
        \end{scope}
	  }
    }
    
    \draw[thick] (3*\b, 3*\c) node[anchor = south east]{$\Omega$} -- (3*\a, 0) -- (3*\a + \b,  \c);
    \draw[->] (4*\a, 2*\c) -- (6*\a, 2*\c) node[midway, above]{fold};
    \end{scope}
    
    \begin{scope}[shift={(\a*13, 0)}]
    \foreach \i in {0,...,2} {
      \foreach \j in {\i,...,2} {
        \def \x {\i*\a + \j*\b - \i*\b}
        \def \y {\j*\c - \i*\c}
        \begin{scope}[shift={(\x, \y)}] 
          \draw[gray] (0, 0) -- (\a, 0) -- (\b, \c) -- (0,0);
        \end{scope}
	  }
    }
    \begin{scope}[shift={(2*\b, 2*\c)}, rotate = 60] 
      \draw[gray] (0, 0) -- (\a, 0) -- (\b, \c) -- cycle;
    \end{scope}
    
    \draw[thick] (3*\b, 3*\c) -- (2*\b, 2*\c) -- (2*\a, 0) -- (2*\a + \b,  \c);
    \draw (3*\b, 3*\c) node[above] {$\ell = 1$};
    \draw[->] (3*\a, 1.5*\c) -- (5*\a, 1.5*\c) node[midway, above]{fold};
    \end{scope}
    
    \begin{scope}[shift={(\a*19, 0)}]
    \foreach \i in {0,1} {
      \foreach \j in {\i,1} {
        \def \x {\i*\a + \j*\b - \i*\b}
        \def \y {\j*\c - \i*\c}
        \begin{scope}[shift={(\x, \y)}] 
          \draw[gray] (0, 0) -- (\a, 0) -- (\b, \c) -- (0,0);
        \end{scope}
	  }
    }
    \begin{scope}[shift={(\b, \c)}, rotate = 60] 
      \draw[gray] (0, 0) -- (\a, 0) -- (\b, \c) -- cycle;
    \end{scope}
    
    \draw[thick] (2*\b, 2*\c) -- (\b, \c) -- (\a, 0) -- (\a + \b,  \c);
    \draw (2*\b, 2*\c) node[above] {$\ell = 2$};
    \draw[->] (2*\a, \c) -- (4*\a, \c) node[midway, above]{fold};
    \end{scope}
    
    \begin{scope}[shift={(\a*24, 0)}]
    \begin{scope}[shift={(0, 0)}, rotate = 60] 
      \draw[gray] (0, 0) -- (\a, 0) -- (\b, \c) -- cycle;
    \end{scope}
    \draw[gray] (0, 0) -- (\a, 0) -- (\b, \c) -- cycle;
    \draw (\b, \c) node[above] {$\ell = 3$};
    \end{scope}
\end{tikzpicture} 
\caption{The folding procedure of the reference domain to construct the initial conditions. Lines along which we fold are drawn in boldface.} 
\label{fig:fold}
\end{figure}
 
  Figures \ref{fig:5fold} and \ref{fig:7fold} illustrate the local minimizers obtained for each folded pattern shown in Figure \ref{fig:fold} for $\phi = \frac{2\pi}5$ and $\phi = \frac{2\pi}7$ respectively. Figure \ref{fig:Efolds} shows the related energy values. It is clear that the folded local minimizers have lower energy. In fact, it seems that the energy values decrease in an affine manner with the number of folds, and that, after linear extrapolation, $0$ energy would be obtained around 4 folds.

The observation that $E_\e$ has folded patterns as local minima with low energy is not unexpected from the expression of the continuum energy $E$. Indeed, $\Psi=0$
implies that the minimization
problem of $E$ lacks rigidity; Lemma $\ref{l:W:rigid}$
and Proposition $\ref{p:Egtr0}$ are not valid anymore. In fact,
if we substitute $\Psi=0$
into (\ref{W:KM17}),
we obtain (see Lemma \ref{l:QW0}) $QW(0)=0$, which implies that $0 \in L^p(\Omega)$ is a global minimizer of $E$.
%
%
%
This is consistent with the observation in Figure \ref{fig:Efolds}, where the energy values decay with the number of folds.


\begin{figure}[h]
\centering
\begin{tikzpicture}[scale=1]
\def \x {4.2}
\def \y {2.75}
\def \xi {0}
\def \yi {0}
\def \a {7}

\node (label) at (\xi, \yi) {\includegraphics[height=6cm]{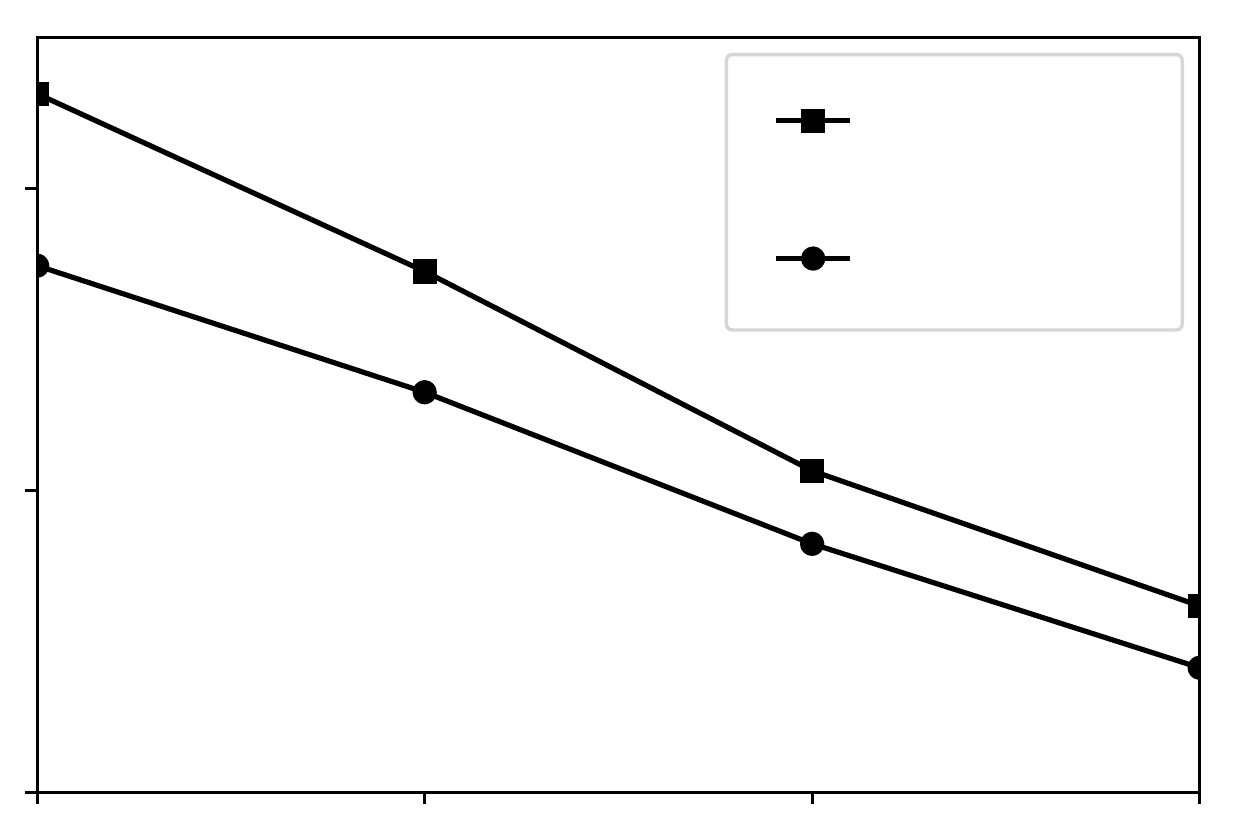}};
\draw (-\x, -\y) node[below] {$0$}; 
\draw (-\x/3, -\y) node[below] {$1$}; 
\draw (\x/3, -\y) node[below] {$2$};
\draw (\x, -\y) node[below] {$3$} node[right] {number of folds}; 
\draw (-\x, -\y) node[left] {$0$};
\draw (-\x, -.2*\y) node[left] {$1$};
\draw (-\x, .6*\y) node[left] {$2$};
\draw (-\x, \y) node[anchor = south west] {$e_\e \times 10^3$};
\draw (2, 2.13) node[right] {$\phi = \tfrac{2\pi}5$};
\draw (2, 1.13) node[right] {$\phi = \tfrac{2\pi}7$};

\end{tikzpicture}
\caption{The energy values $e_\e = E_\e (\overline U_\e)$ for $\e = 2^{-2}$ computed from folded initial conditions (see Figure \ref{fig:fold}). 
}
\label{fig:Efolds}
\end{figure}

%

\begin{figure}[h]
\centering
\begin{tikzpicture}[scale=1]
\def \x {2.3}
\def \y {-2.8}
\def \xi {0}
\def \yi {0}
\def \a {7}
\begin{scope}[shift={(0,0)},scale=1]
  \node (label) at (\xi, \yi) {\includegraphics[height=6cm]{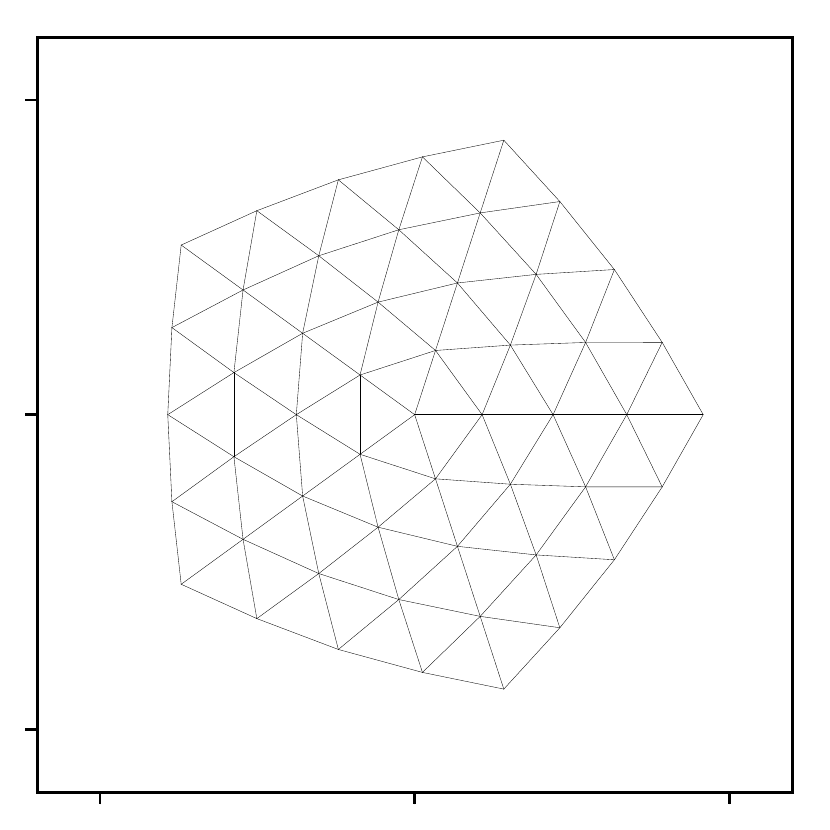}};
  \draw (-\x, \y) node[below] {$-1$};  
  \draw (0, \y) node[below] {$0$};
  \draw (\x, \y) node[below] {$1$};
  \draw (\y, -\x) node[left] {$-1$};  
  \draw (\y, 0) node[left] {$0$};
  \draw (\y, \x) node[left] {$1$};
  \draw (-\x,\x) node[right] {$0$ folds};
\end{scope}
\begin{scope}[shift={(\a,0)},scale=1]
  \node (label) at (\xi, \yi){\includegraphics[height=6cm]{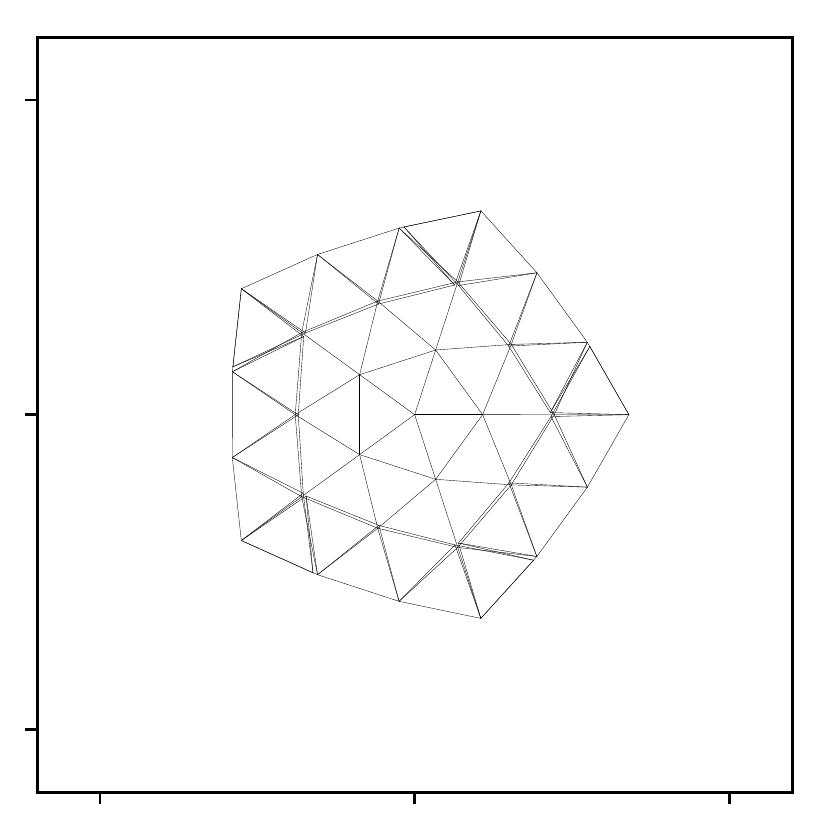}};
  \draw (-\x, \y) node[below] {$-1$};  
  \draw (0, \y) node[below] {$0$};
  \draw (\x, \y) node[below] {$1$};
  \draw (\y, -\x) node[left] {$-1$};  
  \draw (\y, 0) node[left] {$0$};
  \draw (\y, \x) node[left] {$1$};
  \draw (-\x,\x) node[right] {$1$ fold};
\end{scope}
\begin{scope}[shift={(0,-\a)},scale=1]
  \node (label) at (\xi, \yi) {\includegraphics[height=6cm]{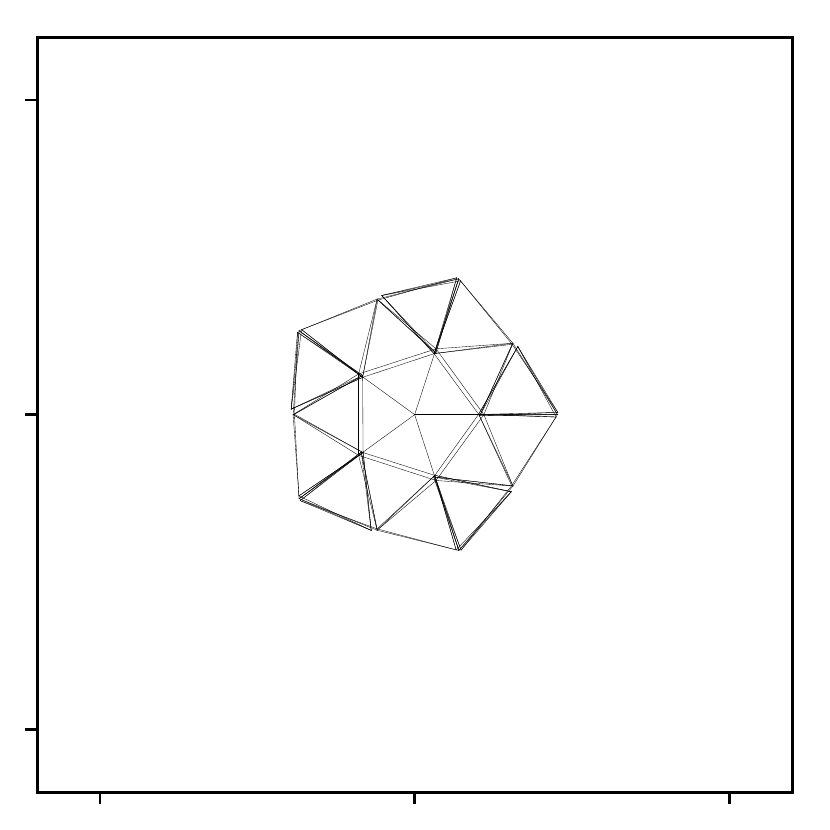}};
  \draw (-\x, \y) node[below] {$-1$};  
  \draw (0, \y) node[below] {$0$};
  \draw (\x, \y) node[below] {$1$};
  \draw (\y, -\x) node[left] {$-1$};  
  \draw (\y, 0) node[left] {$0$};
  \draw (\y, \x) node[left] {$1$};
  \draw (-\x,\x) node[right] {$2$ folds};
\end{scope}
\begin{scope}[shift={(\a,-\a)},scale=1]
  \node (label) at (\xi, \yi){\includegraphics[height=6cm]{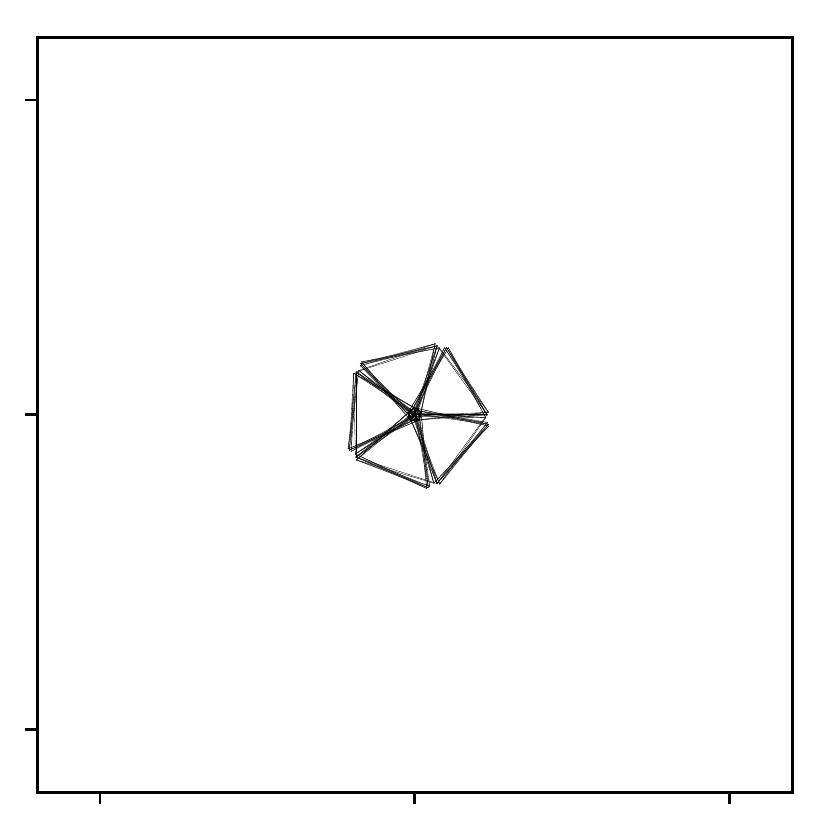}};
  \draw (-\x, \y) node[below] {$-1$};  
  \draw (0, \y) node[below] {$0$};
  \draw (\x, \y) node[below] {$1$};
  \draw (\y, -\x) node[left] {$-1$};  
  \draw (\y, 0) node[left] {$0$};
  \draw (\y, \x) node[left] {$1$};
  \draw (-\x,\x) node[right] {$3$ folds};
\end{scope}
\end{tikzpicture}
\caption{Minimizers of $E_\varepsilon$ obtained from the folded initial conditions shown in Figure \ref{fig:fold}.}
\label{fig:5fold}
\end{figure}

\clearpage

\begin{figure}[h]
\centering
\begin{tikzpicture}[scale=1]
\def \x {2.3}
\def \y {-2.8}
\def \xi {0}
\def \yi {0}
\def \a {7}
\begin{scope}[shift={(0,0)},scale=1]
  \node (label) at (\xi, \yi) {\includegraphics[height=6cm]{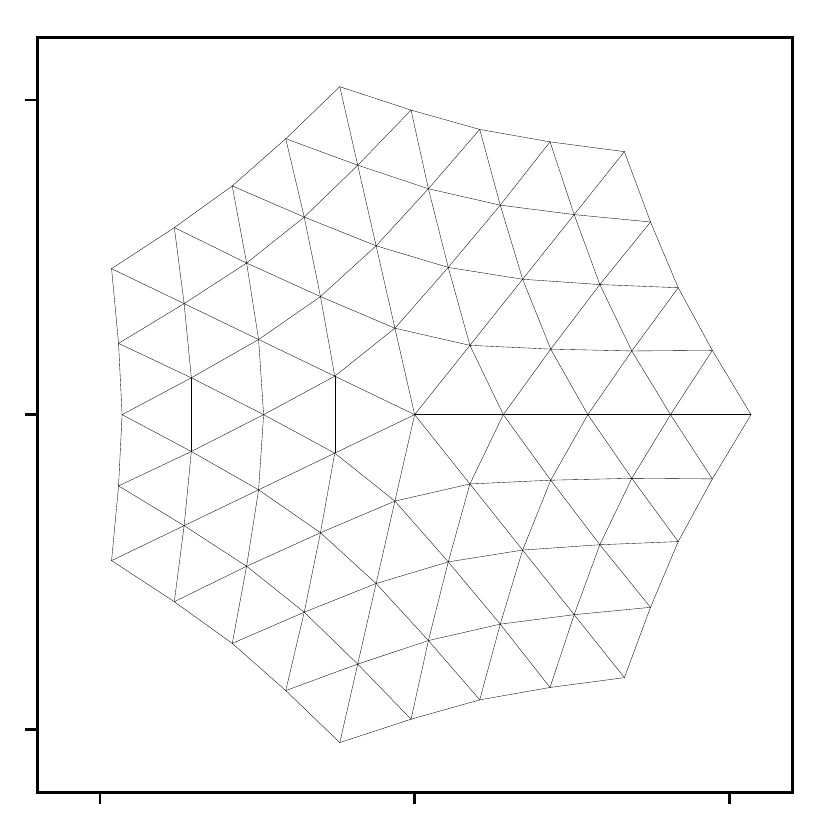}};
  \draw (-\x, \y) node[below] {$-1$};  
  \draw (0, \y) node[below] {$0$};
  \draw (\x, \y) node[below] {$1$};
  \draw (\y, -\x) node[left] {$-1$};  
  \draw (\y, 0) node[left] {$0$};
  \draw (\y, \x) node[left] {$1$};
  \draw (-\x,\x) node[right] {$0$ folds};
\end{scope}
\begin{scope}[shift={(\a,0)},scale=1]
  \node (label) at (\xi, \yi){\includegraphics[height=6cm]{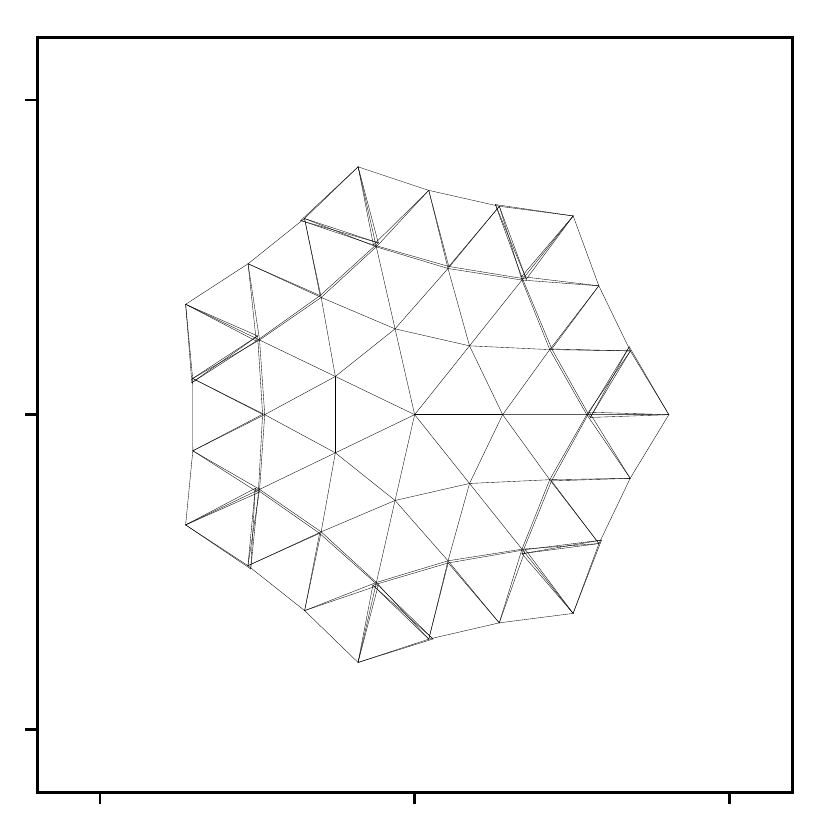}};
  \draw (-\x, \y) node[below] {$-1$};  
  \draw (0, \y) node[below] {$0$};
  \draw (\x, \y) node[below] {$1$};
  \draw (\y, -\x) node[left] {$-1$};  
  \draw (\y, 0) node[left] {$0$};
  \draw (\y, \x) node[left] {$1$};
  \draw (-\x,\x) node[right] {$1$ fold};
\end{scope}
\begin{scope}[shift={(0,-\a)},scale=1]
  \node (label) at (\xi, \yi) {\includegraphics[height=6cm]{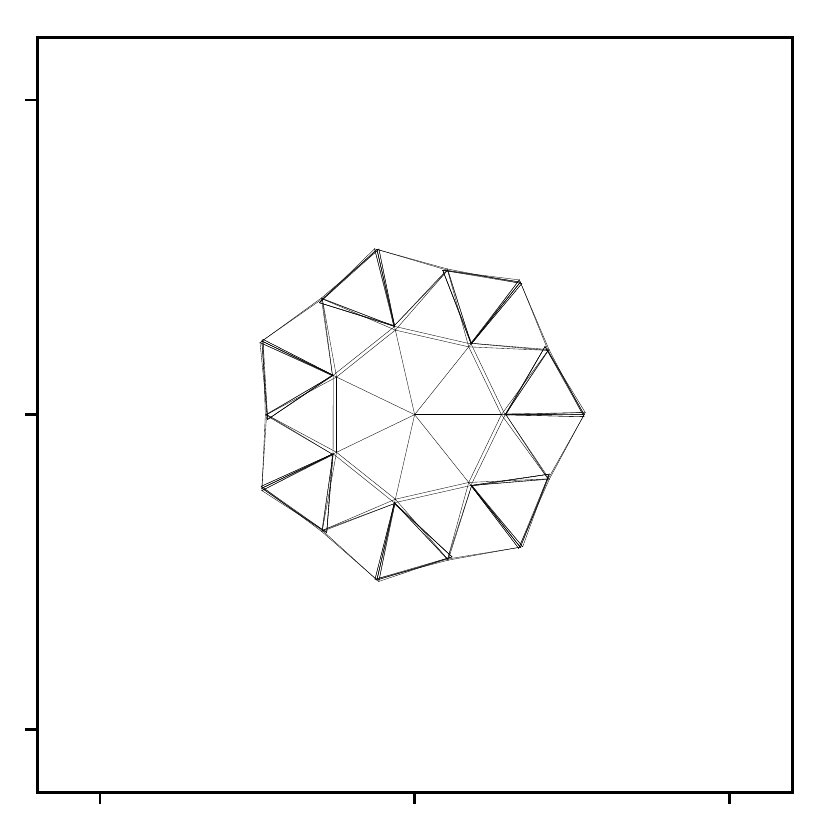}};
  \draw (-\x, \y) node[below] {$-1$};  
  \draw (0, \y) node[below] {$0$};
  \draw (\x, \y) node[below] {$1$};
  \draw (\y, -\x) node[left] {$-1$};  
  \draw (\y, 0) node[left] {$0$};
  \draw (\y, \x) node[left] {$1$};
  \draw (-\x,\x) node[right] {$2$ folds};
\end{scope}
\begin{scope}[shift={(\a,-\a)},scale=1]
  \node (label) at (\xi, \yi){\includegraphics[height=6cm]{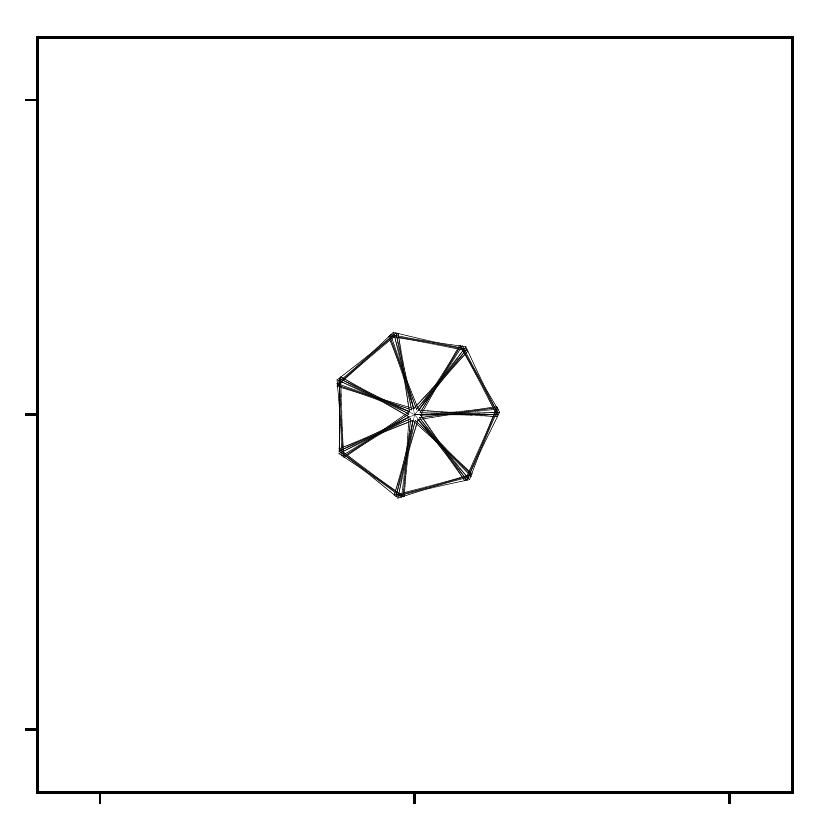}};
  \draw (-\x, \y) node[below] {$-1$};  
  \draw (0, \y) node[below] {$0$};
  \draw (\x, \y) node[below] {$1$};
  \draw (\y, -\x) node[left] {$-1$};  
  \draw (\y, 0) node[left] {$0$};
  \draw (\y, \x) node[left] {$1$};
  \draw (-\x,\x) node[right] {$3$ folds};
\end{scope}
\end{tikzpicture}
\caption{Minimizers of $E_\varepsilon$ obtained from the folded initial conditions shown in Figure \ref{fig:fold}.}
\label{fig:7fold}
\end{figure}

\clearpage

\section{Future perspectives}

We develop a continuum model (the energy $E$ in \eqref{E:QW}) for the energy of a disclination in a planar geometry. We construct $E$ rigorously from an atomistic model (Theorem \ref{thm}), and show that any minimizer of $E$ corresponds to a stressed configuration of the medium (Proposition \ref{p:Egtr0}). We intend this first rigorous study as a stepping stone towards a general variational theory capable of describing both morphology and energetics of disclinations and their effect on the macrosocopic properties of a material.
By analyzing the interaction of  mismatches and distortions on the hexagonal lattice,
we take a first step towards the investigation of the interaction of defects with  the lattice kinematics, thus opening a possible path to modeling more complex systems such as austenite-martensite microstructures in Shape-Memory Alloys and kink formations as mentioned in the introduction.

  \section*{Acknowledgements}
P.C. is supported by JSPS  Grant-in-Aid for Young Scientists (B) 16K21213
and   by JSPS Innovative Area Grant 19H05131. P.C. holds an honorary appointment at La Trobe University and is a member of GNAMPA. 
The authors 
acknowledge the Research Institute for Mathematical Sciences, an International Joint Usage and Research Center located in Kyoto University, where part of the work contained in this paper was carried out.

 \appendix
 \section{Appendix}

\begin{lem}\label{2005071702}
For all $0 \leq \sigma_1 \leq \sigma_2$ and all $\theta \in \R$,
\begin{eqnarray}\label{2005171621}
14 \sum_{k=0}^5 (|\Sigma R_{\theta + k\pi/3} e_1| - 1)^2
   \geq (\sigma_1 - 1)^2 + (\sigma_2 - 1)^2,
   \quad \text{where } \Sigma := \begin{bmatrix}
     \sigma_1 & 0 \\ 0 & \sigma_2
   \end{bmatrix}.
\end{eqnarray}
\end{lem}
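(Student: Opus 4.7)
The plan is to reduce \eqref{2005171621} to a three-term sum, rewrite everything via elementary trigonometric identities, and then split into two regimes in the size of $\sigma_2$.

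First, since $R_{\theta+(k+3)\pi/3} = -R_{\theta+k\pi/3}$ and $|\cdot|$ is invariant under negation, we have $|\Sigma R_{\theta+(k+3)\pi/3}e_1| = |\Sigma R_{\theta+k\pi/3}e_1|$, so $\sum_{k=0}^{5} = 2\sum_{k=0}^{2}$. Setting $f_k := |\Sigma R_{\theta+k\pi/3}e_1|$ and $c_k := \cos^2(\theta+k\pi/3)$, we have $f_k^2 = c_k\sigma_1^2 + (1-c_k)\sigma_2^2$ and in particular $\sigma_1 \le f_k \le \sigma_2$. Using $\cos^2\alpha = (1+\cos 2\alpha)/2$ together with $\sum_{k=0}^{2}\cos(2\theta+2k\pi/3) = 0$, we obtain the identities $\sum c_k = 3/2$, $\sum c_k^2 = 9/8$, $\sum c_k(1-c_k) = 3/8$ and $\sum(1-c_k)^2 = 9/8$.

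Next, from $(f_k-1)^2 = (f_k^2-1)^2/(f_k+1)^2$ and $f_k \le \sigma_2$ we obtain
\[
  \sum_{k=0}^{2}(f_k-1)^2 \ge (\sigma_2+1)^{-2}\sum_{k=0}^{2}(f_k^2-1)^2.
\]
Since $f_k^2-1 = c_k(\sigma_1^2-1)+(1-c_k)(\sigma_2^2-1)$, expanding the square and inserting the $c_k$-identities turns the right-hand sum into $Q(\sigma_1^2-1,\sigma_2^2-1)$ with $Q(x,y) = \tfrac{9}{8}(x^2+y^2) + \tfrac{3}{4}xy$. A direct completion of the square gives $Q(x,y) = \tfrac{3}{4}(x^2+y^2) + \tfrac{3}{8}(x+y)^2 \ge \tfrac{3}{4}(x^2+y^2)$, hence
\[
  \sum_{k=0}^{2}(f_k-1)^2 \ge \frac{3\bigl[(\sigma_1^2-1)^2+(\sigma_2^2-1)^2\bigr]}{4(\sigma_2+1)^2}.
\]

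To conclude we split into two cases. If $\sigma_2 \le 2$, then $(\sigma_2+1)^2 \le 9$, and factoring $(\sigma_i^2-1)^2 = (\sigma_i-1)^2(\sigma_i+1)^2$ with $(\sigma_i+1)^2 \ge 1$ (valid since $\sigma_i \ge 0$) yields $\sum_{k=0}^{2}(f_k-1)^2 \ge \tfrac{1}{12}[(\sigma_1-1)^2+(\sigma_2-1)^2]$; multiplication by $28$ then closes the bound. If $\sigma_2 > 2$, we drop the $(\sigma_1^2-1)^2$ term to get $\sum_{k=0}^{2}(f_k-1)^2 \ge \tfrac{3}{4}(\sigma_2-1)^2$, and the combination $\sigma_1 \le \sigma_2$ with $\sigma_2 > 2$ forces $(\sigma_1-1)^2 \le (\sigma_2-1)^2$, so $(\sigma_1-1)^2+(\sigma_2-1)^2 \le 2(\sigma_2-1)^2 \le 28\cdot\tfrac{3}{4}(\sigma_2-1)^2$. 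The main technical point is the loss of the factor $(\sigma_2+1)^{-2}$, which prevents a single uniform estimate; the case split at $\sigma_2=2$ circumvents it, and the rather loose constant $14$ in the statement leaves substantial margin in both branches.
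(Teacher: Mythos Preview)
Your proof is correct. Every step checks out: the reduction to three terms via $R_{\alpha+\pi}=-R_\alpha$, the trigonometric identities for $\sum c_k$ and $\sum c_k^2$, the lower bound $(f_k-1)^2\ge(f_k^2-1)^2/(\sigma_2+1)^2$, the exact evaluation $\sum(f_k^2-1)^2=\tfrac98(x^2+y^2)+\tfrac34xy\ge\tfrac34(x^2+y^2)$, and the case split at $\sigma_2=2$. The claim $(\sigma_1-1)^2\le(\sigma_2-1)^2$ when $\sigma_2>2$ is justified since $|\sigma_1-1|\le\max\{1,\sigma_2-1\}=\sigma_2-1$.

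Your route differs substantially from the paper's. The paper splits along the circle $\sigma_1^2+\sigma_2^2=2$ and argues geometrically: in each half it locates two of the six angles for which $\sin^2$ (respectively $\cos^2$) is at least $3/4$, keeps only those two terms, and then applies tangent-line/secant-line estimates to the function $t\mapsto\sqrt{t^2+1}$ to compare with $(\sigma_i-1)^2$. Your argument is instead purely algebraic: you keep all three terms, pass from $f_k-1$ to $f_k^2-1$ at the cost of a factor $(\sigma_2+1)^{-2}$, and then exploit the exact identity for $\sum(f_k^2-1)^2$ coming from $\sum c_k=3/2$, $\sum c_k^2=9/8$. This is shorter, avoids the somewhat delicate convexity/secant estimates on $\sqrt{t^2+1}$, and makes the role of the constant $14$ transparent (you actually prove the inequality with $12$ in place of $14$ in the first case and with far smaller constants in the second). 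The paper's approach, on the other hand, does not rely on the special averaging identities for the hexagonal directions and would adapt more readily to other bond geometries where those exact sums are unavailable.
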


\begin{proof}
We separate 2 cases: $\sigma_1^2 + \sigma_2^2 \geq 2$ and $\sigma_1^2 + \sigma_2^2 \leq 2$. In the first case, we start with bounding the right-hand side 
of (\ref{2005171621})  
from above. If $\sigma_1 \geq 1$, then clearly $(\sigma_1 - 1)^2 + (\sigma_2 - 1)^2 \leq 2 (\sigma_2 - 1)^2$. If $\sigma_1 \leq 1$, then from the observation that the line segment between the points $(0, \sqrt 2)$ and $(1,1)$ on the circle $\sqrt 2 \mathbb S$   is below the arc of $\sqrt 2 \mathbb S$ between the same two points, we obtain that $\sigma_2 \geq 1 + (\sqrt 2 - 1) (1 - \sigma_1)$. Hence,
\[
 0 \leq  1 - \sigma_1 \leq \frac{\sigma_2 - 1}{\sqrt 2 - 1} = (\sqrt 2 + 1) (\sigma_2 - 1).
\]
This together with the bound for $\sigma_1 \geq 1$ we get
\begin{equation} \label{pf:2}
  (\sigma_1 - 1)^2 + (\sigma_2 - 1)^2 < 7 (\sigma_2 - 1)^2.
\end{equation}

We continue by bounding the left-hand side  in (\ref{2005171621}) from below. Writing
\[
  |\Sigma R_{\theta + k\pi/3} e_1|^2 
  = \sigma_1^2 \cos^2 (\theta + k\pi/3) + \sigma_2^2 \sin^2 (\theta + k\pi/3),
\]
we note from the facts that $\alpha \mapsto \sin^2 \alpha$ is $\pi$-periodic and $\{ \alpha \in [0, \pi]: \sin^2 \alpha \geq \frac34 \} = [\pi/3, 2\pi/3]$ that there are at least two values for $k \in \{ 0, \ldots, 5 \}$ for which $\sin^2 (\theta + k\pi/3) \geq \frac34$. For these values of $k$, we have by $\sigma_2 \geq \max \{1, \sigma_1 \}$ and $\sigma_1^2 + \sigma_2^2 \geq 2$ that
\[
  |\Sigma R_{\theta + k\pi/3} e_1|^2 
  \geq \frac14 \sigma_1^2 + \frac34 \sigma_2^2
  \geq \frac12 \sigma_2^2 + \frac12
  \geq 1,
\]
and thus
\begin{equation} \label{pf:1}
  14 \sum_{k=0}^5 (|\Sigma R_{\theta + k\pi/3} e_1| - 1)^2
  \geq 28 \Big( \frac1{\sqrt 2} \sqrt{ \sigma_2^2 + 1 } - 1 \Big)^2.  
\end{equation}
Estimating the convex function $f(x) = \sqrt{ x^2 + 1 }$ from below by its tangent at $x = 1$ yields
\begin{align*}
  \sqrt{ \sigma_2^2 + 1 }
  \geq \sqrt{ 2 } + (\sigma_2 - 1) / \sqrt{ 2 }.
\end{align*}
Plugging this estimate into \eqref{pf:1}, we observe that the resulting lower bound equals the upper bound in \eqref{pf:2}. This completes the proof in the case $\sigma_1^2 + \sigma_2^2 \geq 2$.

In the second case, $\sigma_1^2 + \sigma_2^2 \leq 2$, we follow a similar procedure. 
We claim that $(\sigma_2 - 1)^2 \leq (\sigma_1 - 1)^2$.
Then, instead of \eqref{pf:2}, we get
\begin{equation} \label{pf:4}
  (\sigma_1 - 1)^2 + (\sigma_2 - 1)^2 \leq 2 (1 - \sigma_1)^2.
  \end{equation}

To prove this claim, we note from $\sigma_1 \leq \sigma_2$ that $\sigma_1 - 1 \leq \sigma_2 - 1$. To get an upper bound for $\sigma_2 - 1$, we obtain from  $\sigma_1^2 + \sigma_2^2 \leq 2$ that 
\[ \sigma_2 \leq \sqrt{2 - \sigma_1^2}. \]
Since the right-hand side is concave in $\sigma_1$, we can bound it from above by its tangent at $\sigma_1=1$, this yields $\sigma_2 \leq 2 - \sigma_1$, and thus $\sigma_2 - 1 \leq 1 - \sigma_1$. The claim follows.

%
    %

For the left-hand side
of (\ref{2005171621}),
similar to the previous case, there are at least two values for $k \in \{ 0, \ldots, 5 \}$ for which $\cos^2 (\theta + k\pi/3) \geq \frac34$. For these values of $k$, we have by $\sigma_1 \leq \sigma_2$, $\sigma_1^2 + \sigma_2^2 \leq 2$ and $\sigma_1 \leq 1$ that 
\[
  |\Sigma R_{\theta + k\pi/3} e_1|^2 
  \leq \frac34 \sigma_1^2 + \frac14 \sigma_2^2
  \leq \frac12 \sigma_1^2 + \frac12
  \leq 1,
\]
and thus
\begin{equation}\label{pf:3}
  14 \sum_{k=0}^5 (1 - |\Sigma R_{\theta + k\pi/3} e_1|)^2 
  \geq 28 \Big( 1 - \frac1{\sqrt 2} \sqrt{ \sigma_1^2 + 1 } \Big)^2.  
\end{equation}
%
%
Using that $f(x) = \sqrt{ x^2 + 1 } \leq f(1) + (1-x) (f(0) - f(1))$, we obtain
\begin{equation*}
  \sqrt{ \sigma_1^2 + 1 } 
  \leq \sqrt 2 - (1 - \sigma_1) (\sqrt 2 - 1).
\end{equation*}
Plugging this estimate into \eqref{pf:3}, we observe that the resulting lower bound is larger than the upper bound in \eqref{pf:4}. 
\end{proof}
 
 \begin{lem}\label{l:QW0}
 Let $W$ be as in (\ref{W:KM17}) with $\Psi=0$, that is, 
  \begin{equation*} 
  W(A) = \sum_{e \in B^1} \Phi \left( \big| e A \big| - 1 \right).
\end{equation*}   
Then, 
$$
QW(0)=0.
$$
 \end{lem}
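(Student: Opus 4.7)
The plan is to combine the characterization \eqref{2004262119} of $QW$ with a two-scale folding construction that realizes $0$ as the barycenter of a gradient field valued almost everywhere in $O(2)$. Since $\Psi \equiv 0$, one has $W(A) = \sum_{e \in B^1} \Phi(|eA|-1)$, which is nonnegative and vanishes on $O(2)$: indeed, for $A \in O(2)$ and any unit vector $e$, $|eA|^2 = eAA^Te^T = 1$, so every summand is zero. Consequently $QW(0) \geq 0$, and it suffices to exhibit, for every $\delta > 0$, a test map $\vartheta \in W_0^{1,\infty}(\omega,\mathbb{R}^2)$ on some bounded open $\omega$ with $|\partial\omega|=0$ such that $|\omega|^{-1}\int_\omega W(\nabla\vartheta)\, dx < \delta$.

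The key ingredient is a separate-variables fold. Let $\psi \in \Lip(\mathbb{R})$ be the $2$-periodic sawtooth with $\psi(t) = |t|$ on $[-1,1]$, so $\|\psi\|_\infty \leq 1$ and $|\psi'| = 1$ a.e., and define $u(x_1,x_2) := (\psi(x_1), \psi(x_2))$. A direct computation gives $\nabla u(x) = \operatorname{diag}(\psi'(x_1), \psi'(x_2))$, which takes values a.e.\ in the set $\{I, \operatorname{diag}(1,-1), \operatorname{diag}(-1,1), -I\} \subset O(2)$, while $\|u\|_\infty \leq \sqrt 2$. Hence $W(\nabla u) = 0$ almost everywhere. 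Morally, $u$ realises the second-order laminate supported with equal mass on the four matrices above, whose barycenter is $0$; a first-order (single-laminate) construction will not work because the only pair in $O(2)$ averaging to $0$ on the diagonal is $\{I,-I\}$, and these are not rank-one connected.

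The remaining step is a standard boundary correction. Take $\omega = B_1(0)$, rescale $u_n(x) := n^{-1} u(nx)$ (so $\nabla u_n(x) = \nabla u(nx) \in O(2)$ a.e.\ and $\|u_n\|_\infty \leq \sqrt 2/n$), and pick a smooth cutoff $\chi_n \in C_c^\infty(\omega)$ with $\chi_n \equiv 1$ on $B_{1-\delta_n}$ and $|\nabla\chi_n| \leq C/\delta_n$, for a parameter $\delta_n \to 0$ to be chosen. Then $\vartheta_n := \chi_n u_n \in W_0^{1,\infty}(\omega,\mathbb{R}^2)$ and $W(\nabla\vartheta_n) = 0$ on $B_{1-\delta_n}$, while on the annular corrector layer (of area $\lesssim \delta_n$) one has $|\nabla\vartheta_n| \leq |\nabla u_n| + |u_n||\nabla\chi_n| \leq \sqrt 2 + C/(n\delta_n)$. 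Using the $p$-growth bound $W(A) \leq C(|A|^p + 1)$, which holds a fortiori when $\Psi \equiv 0$ (see Lemma \ref{l:prop:W}), it follows that
\begin{equation*}
\int_\omega W(\nabla\vartheta_n)\, dx \leq C\delta_n + C\, n^{-p}\delta_n^{-(p-1)},
\end{equation*}
and the right-hand side tends to $0$ upon setting, for example, $\delta_n = n^{-1/2}$. Combined with $QW(0) \geq 0$ this yields $QW(0) = 0$.

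The main (mild) obstacle is spotting that a second-order construction is needed to reach $0$ while keeping gradients a.e.\ in $O(2)$; once the map $u$ above is written down, the cutoff and growth estimate are routine.
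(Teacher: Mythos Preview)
Your proof is correct. Both you and the paper exploit the same second-order laminate supported on the four diagonal matrices $\pm I,\ \pm\operatorname{diag}(1,-1)\in O(2)$, whose barycenter is $0$ and on which $W$ vanishes. The difference is in how this laminate is turned into the conclusion $QW(0)\le 0$: the paper works algebraically, invoking the rank-one convex envelope $RW$ and checking the $(H_4)$ hierarchical compatibility condition from Dacorogna to get $QW(0)\le RW(0)\le\sum_{i}\lambda_iW(A_i)=0$; you instead build the laminate explicitly as the gradient of the separated sawtooth $u(x)=(\psi(x_1),\psi(x_2))$, rescale, cut off, and plug directly into the infimum formula \eqref{2004262119}, controlling the boundary layer via the $p$-growth bound from Lemma~\ref{l:prop:W}. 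Your route is more self-contained (no appeal to the $(H_I)$ machinery or to $QW\le RW$) at the price of a routine cutoff estimate; the paper's route is cleaner once that machinery is granted. One small remark: your aside that ``the only pair in $O(2)$ averaging to $0$ on the diagonal is $\{I,-I\}$'' is not quite accurate---any pair $\{A,-A\}$ with $A\in O(2)$ averages to $0$---but the conclusion that no single laminate works is correct, since $A-(-A)=2A$ always has rank two.
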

 
 \begin{proof}
 Since $0\le W(A)$ for all $A\in \mathbb{R}^{2 \times 2}$, it follows $0\le QW(0)$.
 We are left to show the reverse inequality.
To do this, we introduce $RW:\mathbb{R}^2\to\mathbb{R}$, the rank-1 convex envelope of $W$ (see \cite[Sec. 6.4]{Dac08})
which is characterized by the following formula (\cite[Thm. 6.10]{Dac08})
 \begin{multline}\label{2005131248}
RW(A)=\inf \bigg\{ \sum_{i=1}^I\lambda_iW(A_i): \\
I\in\mathbb{N}, \ \lambda_i \geq 0, \ \sum^{I}_{i=1} \lambda_i=1, \
\sum_{i=1}^I\lambda_iA_i=A, \ (\lambda_i,A_i) \textrm{ satisfy } (H_I) \bigg\},
 \end{multline}
where $(H_I)$ is the  
hierarchical compatibility
constraint defined in
 $\cite[\textrm{Definition 5.14}]{Dac08}$.  
Recall also that for $W:\mathbb{R}^2\to\mathbb{R}$, then $QW\le RW$ (see \cite[Sec. 6.1]{Dac08}). Hence, it remains to show that $RW(0) \leq 0$.

To show that $RW(0) \leq 0$, we take in \eqref{2005131248} $I = 4$, $\lambda_i = \frac14$ and 
consider the family 
\begin{eqnarray*}
A_1 := \begin{bmatrix}
      1 & 0 \\
       0 & -1
   \end{bmatrix},
   A_2 := \begin{bmatrix}
      1 & 0 \\
       0 & 1
   \end{bmatrix} ,
   A_3=-A_1,A_4=-A_2
\end{eqnarray*}
and observe that
\begin{enumerate}
\item[1)] $ \frac{1}{4}A_1+\frac{1}{4}A_2+\frac{1}{4}A_3+\frac{1}{4}A_4=0$;
\item[2)] $\textrm{rank}(A_1-A_2)=\textrm{rank}(A_3-A_4)=1$;
\item[3)] $\textrm{rank}(\frac{A_1+A_2}{2}-\frac{A_3+A_4}{2})=1$;
\item[4)] $|A_ie_j|^2 = e_j^TA_i^TA_ie_j=1$, $j=1,\dots,6$ 
where 
$e_j\in B^1$ and $i=1,\dots,4$.
\end{enumerate}
 By \cite[Example 5.15]{Dac08}, Properties $2)-3)$  imply that
$(\lambda_i,A_i)$
satisfy condition $(H_I)$.
Then, it is then easy to see from 1) that  $(\lambda_i,A_i)$ constitute an admissible candidate in the minimization problem
(\ref{2005131248}).
Finally, condition $4)$ implies $W(A_i)=0$ for $i=1,\dots,4.$ 

Collecting all the results above, we   have
$$
0\leq QW(0)\le RW(0)\le\sum_{i=1}^4\lambda_i W(A_i)=0.
$$
 \end{proof}

\bibliographystyle{alpha} 
\bibliography{refsPatrick}

\end{document}